\documentclass[a4paper,10pt]{article}
\pdfoutput=1
\usepackage{hyperref}
\hypersetup{hypertexnames = false, bookmarksdepth = 2, bookmarksopen = true, colorlinks, linkcolor = black, citecolor = black, urlcolor = black, pdfstartview={XYZ null null 1}}

\usepackage{amsfonts}
\usepackage[fleqn, leqno]{amsmath}
\usepackage{amsthm}

\usepackage[capitalise,noabbrev]{cleveref}

\usepackage{booktabs}
\usepackage{colonequals}
\usepackage{diagbox}
\usepackage{enumitem}
\usepackage{fullpage}
\usepackage{mathtools}
\usepackage{multirow}
\usepackage{parskip}
\usepackage{thmtools}
\usepackage{tikz-cd}
\usepackage[colorinlistoftodos, textsize = footnotesize]{todonotes}
\usepackage{xparse}
\usepackage{xpatch}
\usepackage{xspace}

\usepackage[utf8]{inputenc}
\usepackage[T1]{fontenc}
\usepackage{libertine}
\usepackage[libertine]{newtxmath}
\usepackage[scaled=0.83]{beramono}
\usepackage{eucal}
\usepackage{microtype}
\usepackage{gitinfo2}

\usepackage[backend=biber, maxbibnames=99, datamodel=mrnumber, sortcites]{biblatex}

\DeclareFieldFormat{mrnumber}{%
  MR\addcolon\space
  \ifhyperref
    {\href{http://www.ams.org/mathscinet-getitem?mr=#1}{\nolinkurl{#1}}}
    {\nolinkurl{#1}}}

\renewbibmacro*{doi+eprint+url}{%
  \iftoggle{bbx:doi}
    {\printfield{doi}}
    {}%
  \newunit\newblock
  \printfield{mrnumber}%
  \newunit\newblock
  \iftoggle{bbx:eprint}
    {\usebibmacro{eprint}}
    {}%
  \newunit\newblock
  \iftoggle{bbx:url}
    {\usebibmacro{url+urldate}}
    {}}

\DeclareSourcemap{
  \maps[datatype=bibtex]{
    \map{
      \step[fieldset=issn, null]
    }
  }
}
\xpretobibmacro{bib+doi+url}
  {\iffieldundef{doi}
     {}
     {\clearfield{url}}}
  {}{}

\xpretobibmacro{cite+doi+url}
  {\iffieldundef{doi}
     {}
     {\clearfield{url}}}
  {}{}

\newcounter{todocounter}
\DeclareDocumentCommand\addreference{g}{\stepcounter{todocounter}\todo[color = blue!30]{\thetodocounter. Add reference\IfNoValueF{#1}{: #1}}\xspace}
\DeclareDocumentCommand\checkthis{g}{\stepcounter{todocounter}\todo[color = red!50]{\thetodocounter. Check this\IfNoValueF{#1}{: #1}}\xspace}
\DeclareDocumentCommand\fixthis{g}{\stepcounter{todocounter}\todo[color = orange!50]{\thetodocounter. Fix this\IfNoValueF{#1}{: #1}}\xspace}
\DeclareDocumentCommand\expand{g}{\stepcounter{todocounter}\todo[color = green!50]{\thetodocounter. Expand\IfNoValueF{#1}{: #1}}\xspace}

\makeatletter
\renewcommand\thmt@autorefsetup{%
  \@xa\def\csname\thmt@envname autorefname\@xa\endcsname\@xa{\thmt@thmname}%
}
\makeatother

\declaretheoremstyle[
  spaceabove = 3pt,
  spacebelow = 3pt,
  bodyfont = \itshape,
]{first}
\declaretheoremstyle[
  spaceabove = 3pt,
  spacebelow = 3pt,
]{second}
\declaretheorem[numberwithin=section, style=first]{theorem}

\declaretheorem[sibling=theorem, style=first]{lemma}
\declaretheorem[sibling=theorem, style=first]{proposition}

\declaretheorem[sibling=theorem, style=second]{example}
\declaretheorem[sibling=theorem, style=second]{remark}

\Crefname{assumption}{Assumption}{Assumptions}
\Crefname{convention}{Convention}{Conventions}
\Crefname{setup}{Setup}{Setups}

\declaretheorem[numberwithin=section, style=first, title=Theorem]{alphatheorem}

\crefname{alphatheorem}{Theorem}{Theorems}
\crefname{alphaconjecture}{Conjecture}{Conjectures}
\crefname{alphacorollary}{Corollary}{Corollaries}
\crefname{alphaproposition}{Proposition}{Propositions}

\mathchardef\mhyphen="2D

\DeclareMathOperator\Rep{R}
\DeclareDocumentCommand\modulistack{om}{\IfNoValueTF{#1}{\mathcal{M}{(#2)}}{\mathcal{M}^{#1}(#2)}}
\DeclareDocumentCommand\modulispace{om}{\IfNoValueTF{#1}{\mathrm{M}{(#2)}}{\mathrm{M}^{#1}(#2)}}
\DeclareDocumentCommand\repspace{om}{\IfNoValueTF{#1}{\mathrm{R}{(#2)}}{\mathrm{R}^{#1}(#2)}}
\newcommand\moduli{\ensuremath{\mathrm{M}}}

\DeclareMathOperator\source{s}
\DeclareMathOperator\target{t}

\newcommand\semistable{\ensuremath{\mhyphen\mathrm{sst}}}
\newcommand\stable{\ensuremath{\mhyphen\mathrm{st}}}
\newcommand\semisimple{\ensuremath{\mathrm{ssimp}}}

\newcommand\et{\ensuremath{\textrm{\'et}}}
\newcommand\field{\ensuremath{\mathbf{k}}}
\newcommand\Gm{\ensuremath{\mathbb{G}_{\mathrm{m}}}}
\newcommand\rig{\ensuremath{\mathrm{rig}}}
\newcommand\ur{\ensuremath{\mathrm{ur}}}
\newcommand\smooth{\ensuremath{\mathrm{sm}}}

\newcommand\vect[1]{\ensuremath{\boldsymbol #1}}

\DeclareMathOperator\Br{Br}
\DeclareMathOperator\dimvect{\underline{dim}}
\DeclareMathOperator\Ext{Ext}
\DeclareMathOperator\ext{ext}
\DeclareMathOperator\GL{GL}
\DeclareMathOperator\HH{H}
\DeclareMathOperator\image{im}
\DeclareMathOperator\Hom{Hom}
\DeclareMathOperator\Mat{Mat}
\DeclareMathOperator\PGL{PGL}
\DeclareMathOperator\Pic{Pic}
\DeclareMathOperator\sheafEnd{\mathcal{E}nd}
\DeclareMathOperator\Spec{Spec}

\usepackage{xpatch}

\makeatletter
\patchcmd\blx@bblinput{\blx@blxinit}
                      {\blx@blxinit
                      }{}{\fail}
\makeatother

\title{Brauer groups of resolved quiver moduli via gerbes}
\author{Pieter Belmans \and Gianni Petrella \and Sebasti\'an Torres}

\begin{document}
\maketitle

\begin{abstract}
  We show that the Brauer group of any resolution of singularities of
  the moduli space of semistable quiver representations
  is trivial.
  We do this by extending the quiver-curve dictionary,
  translating a proof of the analogous result by Biswas--Hogadi--Holla
  for moduli of vector bundles on a curve
  to the setting of moduli of quiver representations,
  giving an algebro-geometric proof.
  This gives a new proof of this triviality,
  first proved by Le Bruyn--Schofield,
  building on algebraic (resp.~cohomological)
  vanishing results due to Saltman (resp.~Colliot-Th\'el\`ene--Sansuc).
  Reversing the logic,
  our approach gives
  a new algebro-geometric proof of these vanishing results.

\end{abstract}

\section{Introduction}
Moduli spaces of (semi)stable quiver representations
have been introduced in the 1990s \cite{MR1315461}.
Moduli spaces of (semi)stable vector bundles on curves
have been introduced in the 1960s \cite{MR0233371,MR0242185,MR0184252,MR0699278}.
Whilst their origins are seemingly very distinct,
the former coming from representation theory
and the latter from algebraic geometry,
they are both associated to categories of homological dimension~1.
This leads to interesting parallels between the two settings.

The starting point for this work
is the following result \cite[Theorem~1.1]{MR2925603},
for moduli spaces of vector bundles on curves,
where we work over an algebraically closed field~$\field$ of characteristic 0.
\begin{theorem}[Biswas--Hogadi--Holla]
  \label{theorem:biswas-hogadi-holla}
  Let~$C$ be a smooth projective curve of genus~$g\geq 2$.
  Let~$r\geq 2$,
  and~$\mathcal{L}\in\Pic C$.
  Let~$M=\moduli_C(r,\mathcal{L})$ be
  the moduli space of semistable vector bundles of rank~$r$
  and determinant~$\mathcal{L}$.
  For every resolution of singularities~$\widetilde{M}$ of~$M$,
  the Brauer group of $\widetilde{M}$ vanishes.
\end{theorem}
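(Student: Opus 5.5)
Since the Brauer group of a smooth projective variety is a birational invariant, and any two resolutions of~$M$ are birational, it suffices to exhibit a single smooth projective variety birational to~$M$ with trivial Brauer group. The plan is to compare $\Br\widetilde{M}$ with the Brauer group of the stable locus $M^{\mathrm{s}}\subseteq M$, which is smooth and open. Since $\widetilde{M}$ is smooth and contains (an isomorphic copy of) $M^{\mathrm{s}}$ as a dense open subset, purity gives an injection $\Br\widetilde{M}\hookrightarrow\Br M^{\mathrm{s}}$, and the image consists of the unramified classes, i.e.\ those extending over every divisorial valuation of the function field. So the proof reduces to showing that no nonzero class in $\Br M^{\mathrm{s}}$ is unramified.

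The key input is the universal $\PGL_r$-bundle, or more precisely the gerbe of stable bundles. The moduli stack of stable bundles of rank~$r$ and determinant~$\mathcal{L}$ is a $\mu_r$-gerbe (equivalently, modulo scalars, a $\Gm$-gerbe) over $M^{\mathrm{s}}$, and its class $\alpha\in\Br M^{\mathrm{s}}$ is the obstruction to the existence of a universal family. First, I would compute $\Br M^{\mathrm{s}}$: using that the complement of $M^{\mathrm{s}}$ in the smooth stack (or in the parameter scheme of a GIT presentation) has high codimension when $g\geq2$, together with $\Pic$ of the moduli stack being $\mathbb{Z}$ and the stack being (after removing high codimension) a quotient of an open subset of affine space-like space with trivial Brauer group, one shows that $\Br M^{\mathrm{s}}$ is cyclic, generated by $\alpha$, of order $\gcd(r,\deg\mathcal{L})$ (Balaji--Biswas--Gabber--Nagaraj type statement). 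Concretely the Leray sequence for the gerbe $\mathcal{M}^{\mathrm{s}}\to M^{\mathrm{s}}$ gives $\Pic\mathcal{M}^{\mathrm{s}}\to\mathbb{Z}\to\Br M^{\mathrm{s}}\to\Br\mathcal{M}^{\mathrm{s}}$, and the last group vanishes because $\mathcal{M}^{\mathrm{s}}$ is an open substack of complement codimension $\geq2$ in a smooth stack admitting a smooth atlas by an open in affine space (via a Quot scheme) with trivial Brauer group.

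The remaining, and main, step is to show that the generator $\alpha$ (and its nontrivial multiples) is ramified along some divisor of some (any) resolution. I would do this by constructing an explicit family: choose a degeneration, namely a one-parameter family of stable bundles $E_t$ whose limit $E_0$ is strictly semistable, of the form $E_0=F\oplus G$ with $F,G$ stable of coprime-to-each-other-compatible ranks, so that over the corresponding divisor of a blowup the residue of $\alpha$ is computed from the weights of the $\Gm$ acting on the extension data. Equivalently, one finds a curve in $\widetilde{M}$ through the exceptional locus along which the restricted gerbe has nontrivial residue in $\HH^1(\,\cdot\,,\mathbb{Q}/\mathbb{Z})$; by choosing $\operatorname{rk}F$, $\deg F$ appropriately (possible since $r\geq2$), the residue is $\deg F\cdot(\text{generator})$ type expression which is nonzero modulo $\gcd(r,\deg\mathcal{L})$ for each nontrivial multiple. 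Since ramification of the class over a divisorial valuation is intrinsic to the function field, this shows no nonzero multiple of $\alpha$ lies in the image of $\Br\widetilde{M}$, so $\Br\widetilde{M}=0$. I expect the residue computation in this last step—making the local structure of $M$ near a strictly semistable point explicit via Luna's étale slice and identifying the gerbe there—to be the main obstacle.
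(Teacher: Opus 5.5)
Your reduction is fine, and it coincides with the first half of the Biswas--Hogadi--Holla argument that \cref{section:main} transcribes. Namely, $\Br(\widetilde M)\hookrightarrow\Br(M^{\mathrm{s}})\cong\mathbb{Z}/\gcd(r,\deg\mathcal L)\mathbb{Z}$, this group is generated by the gerbe class $\alpha$, and the image consists of the classes unramified over $\field(M)$. (The middle isomorphism is a theorem of Balaji--Biswas--Gabber--Nagaraj and can be cited. Your sketch of $\Br(\mathcal M^{\mathrm{s}})=0$ via ``an atlas by an open in affine space'' is not accurate, but the problem is elsewhere.) The gap is the ramification step, which is the entire content of the theorem, and as set up it cannot succeed.

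First, a point $E_0=F\oplus G$ with $F\not\cong G$ stable is the wrong place to compute. There $\operatorname{Aut}(E_0)=\Gm\times\Gm$ and the scalar $\Gm$ is a direct factor, so on a Luna slice chart the gerbe $[N/\operatorname{Aut}(E_0)]\to[N/(\operatorname{Aut}(E_0)/\Gm)]$ is trivial. Hence $\alpha$ vanishes on the stable part of an \'etale neighbourhood of $[E_0]$. Passing to the henselisation of the valuation ring, every multiple of $\alpha$ is then unramified at every divisorial valuation centred at $[E_0]$, whatever the weights on $\Ext^1(F,G)\oplus\Ext^1(G,F)$. More generally, near $\bigoplus_iF_i^{\oplus m_i}$ the local class comes from $1\to\Gm\to\prod_i\GL_{m_i}\to\cdots$ and is killed by $\gcd_i(m_i)$; use the character $g\mapsto\prod_i\det(g_i)^{a_i}$ with $\sum_ia_im_i=\gcd_i(m_i)$. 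For example, for $r=3$ and $\deg\mathcal L=0$ the only points near which the gerbe is not \'etale-locally trivial are the finitely many $[L^{\oplus3}]$. The point to use is $[V^{\oplus n}]$ with $n=\gcd(r,\deg\mathcal L)$ and $V$ stable of rank $r/n$ and degree $\deg\mathcal L/n$. There the stabiliser is $\GL_n$ and the local gerbe is the universal one for $\GL_n\to\PGL_n$ (cf.~\cref{lemma:special-semistable-representation}).

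Second, your ``equivalent'' reformulation via a curve in $\widetilde M$ (a one-parameter family over $\field$) detects nothing. By Tsen's theorem the function field of a curve over the algebraically closed field $\field$ has trivial Brauer group, and all residues lie in $\HH^1(\field,\mathbb{Q}/\mathbb{Z})=0$. The missing idea is to probe with a test field of transcendence degree~$2$, which is what the paper does:
\begin{enumerate}
  \item Take $K=\field(x,y)$ and the symbol algebra $D=(x,y)_\zeta$ of degree $n$, for which $\ell[D]$ is unramified along $x=0$ if and only if $n\mid\ell$ (\cref{lemma:unramified-multiple}).
  \item Write $D=\sheafEnd(\mathcal E)$ for a $1$-twisted rank-$n$ bundle on a $\Gm$-gerbe $\mathcal T\to\Spec K$. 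The family $\mathcal E\otimes V$ gives a $K$-point $x$ of $\mathcal M^{\rig}$ lying over $[V^{\oplus n}]$ with $x^*\alpha=[D]$.
  \item Join $x$ to the stable locus by a curve $C$ over $K$ and extend to $h\colon C\to\widetilde M$ by properness. An extension $\beta$ of $\ell\alpha$ then gives $\ell[D]=y^*h^*\beta$.
  \item This class is unramified at $\nu$, because $\Spec K\to\widetilde M$ extends to $\Spec R_\nu$. Hence $n\mid\ell$.
\end{enumerate}
This route never requires locating a ramifying divisor on a resolution and computing its residue, which is the step you flag as the main obstacle and leave undone. Note that at $[V^{\oplus n}]$ the slice is essentially tuples of $n\times n$ matrices modulo simultaneous conjugation. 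So the residue computation you envisage amounts to showing that nonzero multiples of the generic division algebra class are ramified, and specialising to $(x,y)_\zeta$ is exactly how that is handled.
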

\vspace{-.5\baselineskip}

The quiver moduli analogue of \cref{theorem:biswas-hogadi-holla},
which is the main result of this article,
reads as follows.
\begin{alphatheorem}
  \label{theorem:main}
  Let~$Q$ be an acyclic quiver,
  let~$\vect{d}$ be a dimension vector,
  and let~$\theta$ be a stability parameter
  for which~$\vect{d}$ is amply stable.
  Let~$M=\modulispace[\theta\semistable]{Q,\vect{d}}$ be
  the moduli space of~$\theta$-semistable representations
  of dimension vector~$\vect{d}$.
  For every resolution of singularities~$\widetilde{M}$ of~$M$,
  the Brauer group of $\widetilde{M}$ vanishes.
\end{alphatheorem}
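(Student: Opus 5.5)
The plan is to follow the Biswas--Hogadi--Holla strategy, phrased for quiver moduli. The Brauer group of a smooth projective variety is a birational invariant. Since every resolution~$\widetilde{M}$ of~$M$ is birational to the stable locus $M^{\stable}=\modulispace[\theta\stable]{Q,\vect{d}}$, it suffices to exhibit \emph{one} smooth projective model with trivial Brauer group. Equivalently, it suffices to show that the unramified Brauer group of the function field $\field(M)$ vanishes.

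\textbf{Step 1: identify the candidate classes.} Ample stability means the unstable locus has codimension at least~$2$ in $\repspace{Q,\vect{d}}$. Hence the stable locus $\repspace[\theta\stable]{Q,\vect{d}}$ has complement of codimension at least~$2$ in an affine space, so $\Pic$ and $\Br$ of it are computable: $\Br$ is trivial by purity. The quotient $\repspace[\theta\stable]{Q,\vect{d}}\to M^{\stable}$ is a principal $G=\prod_i\GL_{d_i}/\Gm$ bundle, and it is a $\Gm$-gerbe at the stack level. Via the Leray spectral sequence, $\Br(M^{\stable})$ is then generated by the class $\alpha$ of the gerbe of universal representations, i.e.\ the obstruction to a universal family. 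Its order is $\gcd(d_i)$ up to the characters that can be used to twist.

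\textbf{Step 2: show $\alpha$ ramifies on any resolution.} Pick a strictly semistable point, i.e.\ a polystable representation $E=E_1\oplus E_2$ with $\theta$-slopes equal. Construct a family over a smooth surface (or a curve transverse to the strictly semistable divisor in a local model) of stable representations degenerating to $E$. Following BHH, I would work étale locally near $[E]$ using the Luna slice: the local model is the quotient $\Ext^1(E,E)/\!/\mathrm{Aut}(E)$, which is the quiver moduli of the Ext-quiver. On this slice the restriction of the gerbe can be computed explicitly: it is the Brauer class of a Severi--Brauer-type family built from the $\Ext^1(E_1,E_2)$ and $\Ext^1(E_2,E_1)$ directions. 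Then show that its residue along the exceptional divisor of a suitable blowup (a divisorial valuation of $\field(M)$) is a nonzero element of $\HH^1(\kappa,\mathbb{Q}/\mathbb{Z})$, of order matching that of $\alpha$. Choosing decompositions $\vect{d}=\vect{d}_1+\vect{d}_2$ with enough freedom, the various residues kill every nontrivial multiple of $\alpha$. Hence no nonzero class in $\Br(M^{\stable})$ is unramified, which proves the theorem.

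The main obstacle is Step 2. One must guarantee that the required strictly polystable decompositions exist for all $\theta$ with $\vect{d}$ amply stable; these are needed to force ramification for each prime dividing the order of $\alpha$. One must also compute the residue explicitly in the local Ext-quiver model. If $\gcd$ considerations make $\alpha$ trivial already (coprime-type cases, where a universal family exists), the theorem follows from Step 1 alone. Otherwise I would first try the simplest decomposition where $E_1,E_2$ are stable with $\Ext^1$ in both directions nonzero. There the local model is a cone over $\mathbb{P}^{a-1}\times\mathbb{P}^{b-1}$, and the residue can be read off from the weights of the $\Gm$-stabilizer, as in the curve case.
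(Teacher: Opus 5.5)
Your reduction agrees with the paper's. Ramification is the right mechanism, but Step 2 looks for it at points where $\alpha$ is invisible, so it cannot work as proposed. The paper's reduction is $\Br(\widetilde M)\hookrightarrow\Br(M^\circ)\cong\mathbb{Z}/\gcd(\vect{d})\mathbb{Z}$, generated by the class $\alpha$ of the $\Gm$-gerbe of universal representations. Everything then hinges on showing that $\ell\alpha|_{M^\circ}$ extends to $\widetilde M$ only if $\gcd(\vect d)\mid\ell$.

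Take a polystable point $E=\bigoplus_i E_i^{\oplus m_i}$ with pairwise non-isomorphic $\theta$-stable $E_i$. The Luna slice identifies $\mathcal M\to\mathcal{M}^{\rig}$, \'etale-locally over $M$, with $[N/G]\to[N/(G/\Gm)]$ for $G=\operatorname{Aut}(E)=\prod_i\GL_{m_i}$. The character $\prod_i\det^{a_i}$ with $\sum_i a_im_i=\gcd_i(m_i)$ gives a twisted line bundle, so this local class is killed by $\gcd_i(m_i)$.

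In your ``simplest'' case $E_1\oplus E_2$ with $E_1\not\cong E_2$, the stabiliser is $\Gm$ and the local model is the cone over $\mathbb{P}^{a-1}\times\mathbb{P}^{b-1}$. There the extension $1\to\Gm\to\Gm^2\to\Gm\to1$ splits, so $\alpha$ is trivial on the stable part of an \'etale neighbourhood of $[E]$. By the henselian property, the residue of $\alpha$ at \emph{every} divisorial valuation centred at $[E]$ vanishes, including along your exceptional divisor. Two-summand points can at best detect order $2$ (when $E_1\cong E_2$), never the odd part of $\gcd(\vect d)$. No choice of splitting $\vect d=\vect d_1+\vect d_2$ repairs this.

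The missing idea is to degenerate to $V^{\oplus n}$, with $n=\gcd(\vect d)$ and $V$ a $\theta$-stable representation of dimension vector $\vect d/n$. There the stabiliser is $\GL_n$ and the local gerbe is the non-split one coming from $\GL_n\to\PGL_n$.

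Such a $V$ need not exist a priori for quivers, unlike stable bundles on curves. The paper proves existence in \cref{proposition:fractions}. It shows that $n\vect e$ is a general subdimension vector of $n\vect d$ whenever $\vect e$ is one of $\vect d$, using $\ext(\vect e,\vect d-\vect e)=0$ and $\hom(n\vect d,n\vect d')=n^2\hom(\vect d,\vect d')$. The criterion $\theta(\vect e)<0$ then passes from $n\vect d$ to $\vect d$.

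Even with $V$ in hand, you would still need a valuation where $\ell\alpha$ has non-zero residue unless $n\mid\ell$. The paper gets this without any residue computation in the local model:
\begin{enumerate}
  \item Over $K=\field(x,y)$, take a $1$-twisted rank-$n$ bundle $\mathcal E$ on the gerbe of the cyclic algebra $D=(x,y)_\zeta$.
  \item Then $\mathcal E\otimes_{\field}V$ defines a $K$-point of $\mathcal{M}^{\rig}$ lying over $[V^{\oplus n}]$, at which $\alpha$ pulls back to $[D]$.
  \item A line through this point in the twisted representation space meets the stable locus, giving a curve that maps to $\widetilde M$.
  \item The valuative criterion of properness for $\widetilde M$ forces $\ell[D]$ to be unramified along $(x)$, which happens only if $n\mid\ell$.
\end{enumerate}
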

\vspace{-.5\baselineskip}
This result is not actually new.
It can be seen to be equivalent to Le Bruyn--Schofield's \cite[Corollary~5~(3)]{MR1048393}.
Their corollary is proved using
\begin{itemize}
  \item the fact that the stable birational equivalence class of
    a quiver moduli space of semisimple representations
    is determined by~$\gcd(\vect{d})$ \cite[Theorem~3]{MR1048393}
    (see also \cref{proposition:function-fields}, which incorporates semistability,
    a notion not yet introduced at the time of op.~cit.), and
  \item a deep result of Saltman \cite[Theorem~2.9]{MR0812169},
    followed shortly afterwards by Colliot-Th\'el\`ene--Sansuc's proof
    using different techniques \cite[Theorem~9.7]{MR0878473},
    on the vanishing of the unramified Brauer group of the centre~$\mathrm{C}_n$ of generic~$n\times n$ matrices
    (see \cref{subsection:saltman} for more on this object and its role in quiver moduli).
\end{itemize}
An overview of the situation is provided by \cref{table:summary}.
In \cref{remark:kronecker-to-jordan}
we explain how to go directly
(avoiding Le Bruyn--Schofield's stable birational equivalence)
from~$\Br(\widetilde{M})$ for
an appropriate choice of acyclic quiver
to~$\Br_\ur(\mathrm{C}_n)$,
which concerns the (non-acyclic) generalised Jordan quiver.

Reversing the logic,
one can also consider \cref{theorem:main}
in combination with the Le Bruyn--Schofield stable birational equivalence
to give an algebro-geometric (as opposed to algebraic, resp.~cohomological)
proof of Saltman's resp.~Colliot-Th\'el\`ene--Sansuc's result,
at least in the case where~$\field$ is algebraically closed and of characteristic~0.

\begin{table}[h]
  \centering
  \begin{tabular}{lclcl}
    & \clap{Le Bruyn--Schofield} \\
    & \clap{\cite[Theorem~3]{MR1048393}}
    &
    & \clap{\cref{proposition:function-fields}} \\
    \multicolumn{1}{c}{$\Br_\ur(\mathrm{C}_n)=0\qquad$}                                         &
    \multicolumn{1}{c}{$\Leftrightarrow$}                                                       &
    \multicolumn{1}{c}{$\qquad\Br_\ur(\field(\Rep(Q,\vect{d}))^{\GL_{\vect{d}}})=0\qquad$}      &
    \multicolumn{1}{c}{$\Leftrightarrow$}                                                       &
    \multicolumn{1}{c}{$\qquad\Br(\widetilde{M})=0$} \\
    \midrule
    \multicolumn{4}{l}{\textbullet\ Saltman: \cite[Theorem~2.9]{MR0812169}}                     & \textbullet\ Le Bruyn--Schofield: \\
    \phantom{\textbullet} \emph{algebraic}                                                      &                                              &  &  & \cite[Corollary~5~(3)]{MR1048393} \\
    \multicolumn{4}{l}{\textbullet\ Colliot-Th\'el\`ene--Sansuc: \cite[Theorem~9.7]{MR0878473}} & \textbullet\ \cref{theorem:main} \\
    \phantom{\textbullet} \emph{cohomological}                                                  &                                              &  &  & \phantom{\textbullet}\emph{algebro-geometric} \\
  \end{tabular}
  \caption{Summary table}
  \label{table:summary}
\end{table}

\paragraph{Quiver-curve dictionary}
The goal of this paper is not to prove a new result
(because it was known already by Le Bruyn--Schofield)
but to expand the quiver-curve dictionary.
Namely, we will take our approach for \cref{theorem:biswas-hogadi-holla}
from \cite{MR2925603}
for moduli spaces of vector bundles on curves,
and explain how to adapt it to the setting of quiver moduli.
This gives a new instance of the quiver-curve dictionary,
relating the properties (and their proofs) of
moduli of vector bundles on curves
and moduli of quiver representations.

Other instances of the many parallels between
these moduli spaces
are given by similarities in
\begin{itemize}[noitemsep]
  \item their constructions as quotients using geometric invariant theory,
    their hyperk\"ahler counterparts, and the role of the Weil conjectures in the study of both,
    surveyed in \cite{MR3882963};
  \item the role Lie algebras play,
    surveyed in \cite{MR3966814};
  \item automorphisms and deformations of moduli spaces,
    comparing \cite{MR0384797} for curves
    to \cite{MR4954467,MR4883104} for quivers;
  \item the universal object giving a fully faithful Fourier--Mukai functor,
    comparing \cite{MR3954042,MR3713871,MR3764066} for curves
    to \cite{MR4883104,MR1688469} for quivers;
  \item the construction of intrinsic ample line bundles on the good moduli spaces
    from the stacky description
    without resorting to geometric invariant theory,
    comparing \cite{MR4480534} for curves
    to \cite{MR5066465} for quivers;
  \item their birational properties,
    comparing \cite{MR1820549} for curves
    to \cite{MR1914089} for quivers;
  \item the positivity properties for the anticanonical line bundle,
    with moduli of vector bundles being Fano varieties \cite{MR0999313}
    and quiver moduli often being Fano varieties \cite{MR4352662}.
\end{itemize}
This article should thus mostly be seen
in light of further extending the quiver-curve dictionary,
and as an elaboration of the special and interesting features
for quiver moduli.

In \cref{section:quiver-moduli} we recall the relevant details in the construction of quiver moduli,
prove some results on
how function fields of quiver moduli compare to one another (\cref{proposition:function-fields}),
on the existence of stable representations (\cref{proposition:fractions}),
and recall the Reineke--Schr\"oer description of the Brauer group of quiver moduli from \cite{MR3683503},
which uses geometric objects called smooth models, see \cite{MR2511752}.
In \cref{section:main} we discuss how one can translate
the approach from \cite{MR2925603}
to the setting at hand.
In \cref{section:remarks}
we give some remarks on \cref{theorem:biswas-hogadi-holla} and \cref{theorem:main},
we describe for an algebro-geometric audience what the role of generic matrices is in the earlier results,
and we comment on some loose ends in \cite{MR2925603,MR3683503}.

\paragraph{Acknowledgements}
We would like to thank Markus Reineke for interesting discussions.
P.B.~was partially supported by the Luxembourg National Research Fund (FNR--17113194) and by NWO (\href{https://doi.org/10.61686/RZKLF82806}{\texttt{doi:10.61686/RZKLF82806}}).
G.P.~was supported by the Luxembourg National Research Fund (FNR--17953441).
S.T.~was supported by the Luxembourg National Research Fund (FNR--17113194) and by ANID, Fondecyt Postdoctoral Grant 3240013.

\section{Quiver moduli}
\label{section:quiver-moduli}
\subsection{Recollection of the construction}
\label{subsection:recollection}
Throughout we let~$\field$ denote an algebraically closed field of characteristic~0.

We let~$Q=(Q_0,Q_1)$ be a quiver,
with vertices~$Q_0$ and arrows~$Q_1$.
An arrow~$\alpha\in Q_1$ has
a source~$\source(\alpha)$ and target~$\target(\alpha)$ in~$Q_0$.
A representation~$V$ of~$Q$ is the data of
a finite-dimensional vector space~$V_i$ for every~$i\in Q_0$
and a linear morphism~$V(\alpha)\colon V_{\source(\alpha)}\to V_{\target(\alpha)}$
for every~$\alpha\in Q_1$.

After choosing bases for the vector spaces~$V_i$,
the affine space
\begin{equation}
  \Rep(Q,\vect{d})\colonequals\prod_{\alpha\in Q_1}\Mat_{d_{\target(\alpha)},d_{\source(\alpha)}}(\field)
\end{equation}
encodes all representations of dimension vector~$\vect{d}$.
The reductive algebraic group
\begin{equation}
  \GL_{\vect{d}}\colonequals\prod_{i\in Q_0}\mathrm{GL}_{d_i}(\field)
\end{equation}
acts on~$\Rep(Q,\vect{d})$ via change of basis,
encoded by conjugation:~$g=(g_i)_{i\in Q_0}$
acts on the tuple~$(V(\alpha))_{\alpha\in Q_1}$ by
\begin{equation}
  g\cdot(V(\alpha))_{\alpha\in Q_1}\colonequals(g_{\target(\alpha)}V(\alpha) g_{\source(\alpha)}^{-1})_{\alpha\in Q_1}.
\end{equation}
The orbits of this action correspond to isomorphism classes of representations.
The central closed subgroup
\begin{equation}
  \Delta\colonequals\{(z\cdot\mathrm{I}_{d_i})_{i\in Q_0}\mid z\in\Gm\}
\end{equation}
of~$\GL_{\vect{d}}$ acts trivially,
so we also have an action by
\begin{equation}
  \PGL_{\vect{d}}\colonequals\GL_{\vect{d}}/\Delta.
\end{equation}
The moduli stack~$[\Rep(Q,\vect{d})/\GL_{\vect{d}}]$
has as its good moduli space~$\Spec\field[\Rep(Q,\vect{d})]^{\GL_{\vect{d}}}$,
which is the moduli space $\modulispace[\semisimple]{Q,\vect{d}}$ of semisimple representations.
When~$Q$ is an acyclic quiver,
this is just a point.

To improve the geometry of the moduli spaces,
one introduces a stability parameter~$\theta\in\Hom(\mathbb{Z}^{Q_0},\mathbb{Z})$.
Assume moreover that~$\theta(\vect{d})=0$.
Then, a representation~$V$ is said to be~$\theta$-stable
(resp.~$\theta$-semistable)
if for every proper non-zero subrepresentation~$W$ of~$V$
we have~$\theta(\dimvect W)<0$
(resp.~$\theta(\dimvect W)\leq 0$).
This induces Zariski-open subsets
\begin{equation}
  \label{equation:inclusions-Rep}
  \Rep^{\theta\stable}(Q,\vect{d})
  \subseteq
  \Rep^{\theta\semistable}(Q,\vect{d})
  \subseteq
  \Rep(Q,\vect{d}),
\end{equation}
and by applying geometric invariant theory we obtain
an open immersion
(resp.~a projective morphism to an affine variety)
\begin{equation}
  \label{equation:3-moduli-spaces}
  \modulispace[\theta\stable]{Q,\vect{d}}
  \hookrightarrow
  \modulispace[\theta\semistable]{Q,\vect{d}}
  \twoheadrightarrow
  \modulispace[\semisimple]{Q,\vect{d}}.
\end{equation}
In particular, when~$Q$ is acyclic,~$\modulispace[\theta\semistable]{Q,\vect{d}}$
is a projective variety.
This will be the case we consider in \cref{section:main}.
Because the global dimension of~$\operatorname{rep}Q$ is~1,
we have that the stable locus is contained in the smooth locus:
\begin{equation}
  \modulispace[\theta\stable]{Q,\vect{d}}
  \subseteq
  \modulispace[\theta\semistable]{Q,\vect{d}}_{\mathrm{sm}}.
\end{equation}

\paragraph{Comparing function fields of quiver moduli}
For the translation between different approaches and results,
which often predate the introduction of stability,
we need to know how function fields of quiver moduli compare.
Recall that~$\vect{d}$ is said to be a \emph{Schur root}
if there exists a representation of dimension vector~$\vect{d}$
whose endomorphism algebra is~$\field$,
or equivalently,
that the general representation of dimension vector~$\vect{d}$
is indecomposable.
Recall also that
$\vect{d}$ is a Schur root
if and only if
there exists a stability parameter~$\theta$
for which there exists a~$\theta$-stable representation
of dimension vector~$\vect{d}$
\cite[Proposition~4.4]{MR1315461}.

The following result is completely standard,
but we have not found a reference in the literature stating it.
\begin{proposition}
  \label{proposition:function-fields}
  Let~$Q$ be a quiver,
  and let~$\vect{d}$ be a Schur root.
  Then for any stability parameter~$\theta$
  for which there exists a~$\theta$-stable representation
  of dimension vector~$\vect{d}$,
  there exist isomorphisms
  \begin{equation}
    \field(\Rep(Q,\vect{d}))^{\GL_{\vect{d}}}
    \cong
    \field(\Rep(Q,\vect{d}))^{\PGL_{\vect{d}}}
    \cong
    \field(\modulispace[\theta\stable]{Q,\vect{d}})
    \cong
    \field(\modulispace[\theta\semistable]{Q,\vect{d}}).
  \end{equation}
\end{proposition}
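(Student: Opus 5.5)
We prove the three isomorphisms from left to right, writing $R\colonequals\Rep(Q,\vect{d})$ and $R^{\theta\stable}\colonequals\Rep^{\theta\stable}(Q,\vect{d})$.

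\textbf{First isomorphism.} The subgroup $\Delta$ acts trivially on~$R$, so the $\GL_{\vect{d}}$-action on $\field(R)$ factors through $\PGL_{\vect{d}}=\GL_{\vect{d}}/\Delta$. A rational function is invariant under $\GL_{\vect{d}}$ if and only if it is invariant under $\PGL_{\vect{d}}$. Hence the two invariant fields are literally equal as subfields of $\field(R)$.

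\textbf{Last isomorphism.} By hypothesis there is a $\theta$-stable representation of dimension vector~$\vect{d}$. So $R^{\theta\stable}$ is a non-empty Zariski-open subset of the irreducible affine space~$R$. Its image $\modulispace[\theta\stable]{Q,\vect{d}}$ is a non-empty open subset of $\modulispace[\theta\semistable]{Q,\vect{d}}$, by the open immersion in \eqref{equation:3-moduli-spaces}. The moduli space $\modulispace[\theta\semistable]{Q,\vect{d}}$ is irreducible, being the image of the irreducible open $\Rep^{\theta\semistable}(Q,\vect{d})$ under the GIT quotient map. A dense open subset of an irreducible variety has the same function field, which gives $\field(\modulispace[\theta\stable]{Q,\vect{d}})\cong\field(\modulispace[\theta\semistable]{Q,\vect{d}})$.

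\textbf{Middle isomorphism.} This is the main step. We use the standard GIT fact that, on the stable locus, the quotient $\pi\colon R^{\theta\stable}\to\modulispace[\theta\stable]{Q,\vect{d}}$ is a geometric quotient by $\PGL_{\vect{d}}$.
\begin{itemize}
  \item Since stable representations have endomorphism algebra~$\field$, their stabiliser in $\GL_{\vect{d}}$ is exactly $\Delta$. So $\PGL_{\vect{d}}$ acts freely on $R^{\theta\stable}$.
  \item Luna's étale slice theorem then shows that $\pi$ is a principal $\PGL_{\vect{d}}$-bundle, étale-locally trivial.
  \item Pulling back along~$\pi$ identifies $\field(\modulispace[\theta\stable]{Q,\vect{d}})$ with a subfield of $\field(R^{\theta\stable})^{\PGL_{\vect{d}}}=\field(R)^{\PGL_{\vect{d}}}$.
\end{itemize}
For the reverse inclusion, let $f$ be an invariant rational function. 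Since $\pi$ is a geometric quotient, the orbits of~$\pi$ are exactly its fibres, and all of them are closed in $R^{\theta\stable}$. By Rosenlicht's theorem, invariant rational functions separate generic orbits, and there is an invariant open subset $U\subseteq R^{\theta\stable}$ with a geometric quotient $U/\PGL_{\vect{d}}$ whose function field is $\field(R)^{\PGL_{\vect{d}}}$. This quotient is birational to the open set $\pi(U)\subseteq\modulispace[\theta\stable]{Q,\vect{d}}$, by uniqueness of geometric quotients.

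Alternatively, we can argue directly. The domain of definition of~$f$ is $\PGL_{\vect{d}}$-invariant, so we may shrink to an invariant open subset $\pi^{-1}(V)$ on which $f$ is regular. There $f$ is an invariant regular function, so it descends to $V$ because $\mathcal{O}_{V}=(\pi_*\mathcal{O})^{\PGL_{\vect{d}}}$ for a good quotient. The main obstacle is making this shrinking precise, namely ensuring that the locus where~$f$ is regular is saturated with respect to~$\pi$. This holds because it is invariant and all fibres of~$\pi$ are single orbits.
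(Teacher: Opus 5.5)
Your proposal is correct and follows essentially the same route as the paper. The first isomorphism is tautological since $\Delta$ acts trivially, the last comes from the open immersion of the non-empty stable locus into the irreducible semistable moduli space, and the middle one comes from comparing Rosenlicht's rational quotient with the quotient $\Rep^{\theta\stable}(Q,\vect{d})\to\modulispace[\theta\stable]{Q,\vect{d}}$, whose fibres are single orbits; the Luna slice and principal bundle remark is not needed for this. Your alternative direct descent argument is also valid and bypasses Rosenlicht entirely: the domain of definition of an invariant $f$ meets the stable locus in an invariant, hence saturated, open set, its image is open because a good quotient is submersive, and $f$ then descends via $\mathcal{O}=(\pi_*\mathcal{O})^{\PGL_{\vect{d}}}$.
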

This allows us to interpret the results from \cite{MR1048393},
predating \cite{MR1315461},
in our setup.

\begin{proof}
  The first isomorphism exists by the definition of~$\PGL_{\vect{d}}$.
  The third isomorphism follows by the open immersion in \eqref{equation:3-moduli-spaces},
  where both varieties are non-empty
  by the assumption on~$\theta$.

  For the second isomorphism,
  one uses the theory of \emph{rational quotients} due to Rosenlicht.
  By \cite[Theorem~2]{MR0082183}
  (see, e.g., \cite[Theorem~1.1]{MR3676045} for an equivalent modern statement,
  and \cite[\S2]{MR1100485} for more context)
  there exists a~$\GL_{\vect{d}}$-invariant dense open subset~$U\subset\Rep(Q,\vect{d})$
  and a dominant morphism~$\pi\colon U\to Z$
  such that for all~$x\in U$ we have~$\pi^{-1}(\pi(x))=\GL_{\vect{d}}\cdot x$,
  and~$\field(\Rep(Q,\vect{d}))^{\GL_{\vect{d}}}\cong\field(Z)$.
  This rational quotient is also universal with respect to rational maps
  (i.e., it is a categorical quotient with respect to rational maps):
  if~$Z'$ is another such quotient,
  for some~$\GL_{\vect{d}}$-invariant dense open subset~$U'$,
  then there exists an induced birational equivalence~$Z\dashrightarrow Z'$,
  inducing an isomorphism~$\field(Z)\cong\field(Z')$.
  Taking~$Z'=\modulispace[\theta\stable]{Q,\vect{d}}$
  gives the second isomorphism.
\end{proof}

Note that it is possible that~$\field(\Rep(Q,\vect{d}))^{\GL_{\vect{d}}}$
is strictly larger than~$\field(\modulispace[\semisimple]{Q,\vect{d}})=\operatorname{Frac}(\field[\Rep(Q,\vect{d})]^{\GL_{\vect{d}}})$:
simply consider an acyclic quiver~$Q$
and suitable~$\vect{d}$ and~$\theta$,
for which the latter is~$\field$,
whereas the former is the function field of
a (non-trivial) moduli space of~$\theta$-(semi)stable representations
by \cref{proposition:function-fields}.

\subsection{Existence of stable representations}
\label{subsection:existence-stable}
At some point in the proof we will need to construct a~$\theta$-stable representation
with (indivisible) dimension vector~$\vect{d}/\gcd(\vect{d})$.
The analogous existence is automatic in the case of vector bundles,
as there exists a stable vector bundle for all pairs~$(r,d)$
with~$r\geq 1$,
see, e.g., \cite[Theorem~8.6.1]{MR1428426}.
However, for moduli of quiver representations this is not automatic,
as the moduli space can be empty,
even for indivisible dimension vectors\footnote{
  It suffices to consider the Kronecker quiver~$\mathrm{K}_2$
  and dimension vector~$\vect{d}=(1,1)$,
  and take for~$\theta$ the inner product with~$(-1,1)$,
  so that the simple subrepresentation~$\mathrm{S}_2$ is always destabilising.
  On the other side of the wall given by the zero morphism
  we find that~$\modulispace[\theta\stable]{\mathrm{K}_2,(1,1)}\cong\mathbb{P}^1$.
}.

What we thus need in \cref{section:main} is (the second point of) the following proposition.
\begin{proposition}
  \label{proposition:fractions}
  Let~$Q$ be a quiver,
  let~$\vect{d}$ be a dimension vector,
  and let~$n\geq 2$.
  \begin{enumerate}
    \item\label{enumerate:schur-root-fraction}
      If~$n\vect{d}$ is a Schur root,
      then~$\vect{d}$ is a Schur root.
    \item\label{enumerate:stable-fraction}
      If~$n\vect{d}$ admits a~$\theta$-stable representation,
      then~$\vect{d}$ admits a~$\theta$-stable representation.
  \end{enumerate}
\end{proposition}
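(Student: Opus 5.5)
For \cref{enumerate:stable-fraction} we reduce everything to Schofield's theory of general subrepresentations. We write~$\vect{e}\hookrightarrow\vect{d}$ if a general representation of dimension vector~$\vect{d}$ has a subrepresentation of dimension vector~$\vect{e}$. Schofield showed that~$\vect{e}\hookrightarrow\vect{d}$ holds if and only if~$\ext(\vect{e},\vect{d}-\vect{e})=0$, where~$\ext(\vect{a},\vect{b})$ is the minimal (equivalently, the general) value of~$\dim\Ext^1(V,W)$ over~$V\in\Rep(Q,\vect{a})$ and~$W\in\Rep(Q,\vect{b})$. Combining this with King's criterion gives the following, by the argument of \cite{MR1315461}: $\vect{d}$ admits a~$\theta$-stable representation if and only if~$\theta(\vect{e})<0$ for every~$\vect{e}\hookrightarrow\vect{d}$ with~$0\neq\vect{e}\neq\vect{d}$. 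The reason is that the stable locus is open, so a stable representation exists if and only if the general one is stable. Since~$\theta(n\vect{d})=0$ if and only if~$\theta(\vect{d})=0$, the normalisation is preserved.

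The key step is to show that~$\vect{e}\hookrightarrow\vect{d}$ implies~$n\vect{e}\hookrightarrow n\vect{d}$. Take general~$V\in\Rep(Q,\vect{e})$ and~$W\in\Rep(Q,\vect{d}-\vect{e})$ with~$\Ext^1(V,W)=0$. Then
\begin{equation}
  \Ext^1(V^{\oplus n},W^{\oplus n})\cong\Ext^1(V,W)^{\oplus n^2}=0,
\end{equation}
so by the minimality in the definition~$\ext(n\vect{e},n(\vect{d}-\vect{e}))=0$. Schofield's criterion then gives~$n\vect{e}\hookrightarrow n\vect{d}$. Now let~$0\neq\vect{e}\neq\vect{d}$ with~$\vect{e}\hookrightarrow\vect{d}$. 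Then~$0\neq n\vect{e}\neq n\vect{d}$ and~$n\vect{e}\hookrightarrow n\vect{d}$, so the stability of a general representation of dimension~$n\vect{d}$ gives~$n\theta(\vect{e})<0$, hence~$\theta(\vect{e})<0$. Therefore~$\vect{d}$ admits a~$\theta$-stable representation.

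Part~\cref{enumerate:schur-root-fraction} then follows formally. If~$n\vect{d}$ is a Schur root, then by \cite[Proposition~4.4]{MR1315461} there is a~$\theta$ for which~$n\vect{d}$ admits a~$\theta$-stable representation. By part~\cref{enumerate:stable-fraction}, $\vect{d}$ admits a~$\theta$-stable representation, and this representation has endomorphism algebra~$\field$, so~$\vect{d}$ is a Schur root. An alternative route is to compare canonical decompositions: by Derksen--Weyman, the canonical decomposition of~$n\vect{d}$ is obtained from that of~$\vect{d}$ by multiplying each summand by~$n$ and then splitting real and isotropic summands into~$n$ copies. So if~$\vect{d}$ had at least two summands, so would~$n\vect{d}$. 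I would only use this alternative as a cross-check.

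The main obstacle is making the stability criterion precise: one has to verify that Schofield's characterisation of~$\hookrightarrow$, and the equivalence between ``a~$\theta$-stable representation exists'' and ``$\theta(\vect{e})<0$ for all general subdimension vectors~$\vect{e}$'', hold in the generality of an arbitrary quiver, including quivers with oriented cycles. Schofield's results are stated for quivers without relations, possibly with cycles, so I expect this to go through. I would check carefully the step that the general representation of dimension~$n\vect{d}$ is stable: a subrepresentation of a general representation has a general subdimension vector only after passing to a dense open subset, so one must handle the finitely many subdimension vectors one at a time.
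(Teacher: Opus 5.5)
Your proof is correct, and its skeleton matches the paper's. Use Schofield's characterisation $\vect{e}\hookrightarrow\vect{d}\iff\ext(\vect{e},\vect{d}-\vect{e})=0$ to show that $n\vect{e}$ is a general subdimension vector of $n\vect{d}$ whenever $\vect{e}$ is one of $\vect{d}$. Then feed this into the subdimension-vector criteria (\cref{proposition:schofield-criterion}, \cref{proposition:theta-stable-criterion}). Your argument differs from the paper's in two places.

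For the key step, the paper goes through the Euler form, $\hom-\ext=\langle n\vect{e},n(\vect{d}-\vect{e})\rangle=n^2\hom(\vect{e},\vect{d}-\vect{e})$. It then imports the equality $\hom(n\vect{a},n\vect{b})=n^2\hom(\vect{a},\vect{b})$ of \cref{lemma:general-hom-multiple} from an external reference. You instead observe directly that the pair $(V^{\oplus n},W^{\oplus n})$ has vanishing $\Ext^1$, so semicontinuity gives $\ext(n\vect{e},n(\vect{d}-\vect{e}))=0$. Your argument is self-contained. It also shows that the paper's computation only needs the trivial inequality $\hom(n\vect{e},n(\vect{d}-\vect{e}))\leq n^2\hom(\vect{e},\vect{d}-\vect{e})$, proved by the same direct-sum and semicontinuity argument. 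The reverse inequality is automatic there because $\ext\geq 0$ and $\ext(\vect{e},\vect{d}-\vect{e})=0$. So \cref{lemma:general-hom-multiple} can be bypassed entirely.

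For the ordering, the paper proves part~1 directly from Schofield's Schur-root criterion and declares part~2 analogous. You prove part~2 and deduce part~1 via \cite[Proposition~4.4]{MR1315461} and the fact that $\theta$-stable representations are bricks. This replaces two parallel arguments by one, with no real change in dependencies, since the direction of King's result you use itself rests on Schofield's criterion.

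Your care about discarding the finitely many non-general subdimension vectors is exactly what the paper's proof of \cref{proposition:theta-stable-criterion} compresses into one sentence. Each locus of representations with a subrepresentation of dimension $\vect{e}$ is closed, hence either everything or a proper closed subset. The Derksen--Weyman cross-check is valid, but it uses a much deeper theorem than necessary.
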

This result seems to be missing from the literature,
which is why we include a proof here.
In \cite[Theorem~3.7]{MR1162487}
it is shown that integer multiples of imaginary Schur roots,
i.e., Schur roots for which~$\langle\vect{d},\vect{d}\rangle\leq 0$,
or equivalently,
roots for which there exist infinitely many non-isomorphic representations,
are again imaginary Schur roots\footnote{This is not true for real Schur roots.}.
The converse, that fractions of Schur roots are again Schur roots,
is however missing from op.~cit.
The result on~$\theta$-stable representations seems to be new.

We will use Schofield's criterion for Schur roots \cite[Theorem~6.1]{MR1162487}.
Recall that a subdimension vector~$\vect{e}\leq\vect{d}$ is \emph{general}
if the subset
\begin{equation}
  \{V\in\Rep(Q,\vect{d})\mid\exists V'\subseteq V\colon\dimvect V'=\vect{e}\}\subseteq\Rep(Q,\vect{d})
\end{equation}
is non-empty and Zariski-open.

\begin{proposition}[Schofield's criterion for Schur roots]
  \label{proposition:schofield-criterion}
  Let~$Q$ be a quiver.
  Let~$\vect{d}$ be a dimension vector.
  Then~$\vect{d}$ is a Schur root
  if and only if
  for every proper non-zero general subdimension vector~$\vect{e}$
  we have that~$\langle\vect{e},\vect{d}\rangle-\langle\vect{d},\vect{e}\rangle>0$.
\end{proposition}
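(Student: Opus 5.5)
Since this is Schofield's \cite[Theorem~6.1]{MR1162487}, the plan is to reconstruct his argument from two inputs from op.~cit.:
\begin{itemize}[noitemsep]
  \item $\vect{e}$ is a general subdimension vector of~$\vect{d}$ if and only if $\ext(\vect{e},\vect{d}-\vect{e})=0$;
  \item the Euler form satisfies $\langle\vect{a},\vect{b}\rangle=\hom(\vect{a},\vect{b})-\ext(\vect{a},\vect{b})$ for general representations of dimension vectors~$\vect{a},\vect{b}$.
\end{itemize}
Throughout I use the identity
\begin{equation*}
  \langle\vect{e},\vect{d}\rangle-\langle\vect{d},\vect{e}\rangle=\langle\vect{e},\vect{d}-\vect{e}\rangle-\langle\vect{d}-\vect{e},\vect{e}\rangle,
\end{equation*}
which holds because the terms $\langle\vect{e},\vect{e}\rangle$ cancel.

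For the forward implication, let~$\vect{d}$ be a Schur root and~$\vect{e}$ a proper non-zero general subdimension vector. Then a general $V\in\Rep(Q,\vect{d})$ has trivial endomorphism algebra, and sits in a short exact sequence $0\to V'\to V\to V''\to 0$ with $\dimvect V'=\vect{e}$.
\begin{itemize}[noitemsep]
  \item Since $\ext(\vect{e},\vect{d}-\vect{e})=0$, we get $\langle\vect{e},\vect{d}-\vect{e}\rangle=\hom(\vect{e},\vect{d}-\vect{e})\geq 0$.
  \item Any nonzero $V''\to V'$ would give a nonzero nilpotent endomorphism $V\to V''\to V'\to V$, so $\Hom(V'',V')=0$.
  \item $V$ is indecomposable, so the extension is non-split, hence $\Ext^1(V'',V')\neq 0$.
\end{itemize}
The pair $(V',V'')$ need not be general for its dimension vectors. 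So instead of quoting the generic value of $\langle\vect{d}-\vect{e},\vect{e}\rangle$, I compute it directly on this pair, since the Euler form is independent of the chosen representations:
\begin{equation*}
  \langle\vect{d}-\vect{e},\vect{e}\rangle=\dim\Hom(V'',V')-\dim\Ext^1(V'',V')=-\dim\Ext^1(V'',V')\leq -1.
\end{equation*}
Combining the two bounds gives $\langle\vect{e},\vect{d}\rangle-\langle\vect{d},\vect{e}\rangle\geq 1>0$.

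For the converse, suppose~$\vect{d}$ is not a Schur root. I use Kac's canonical decomposition $\vect{d}=\vect{\beta}_1\oplus\dots\oplus\vect{\beta}_s$ with $s\geq 2$: a general $V$ is a direct sum $V_1\oplus\dots\oplus V_s$ of general representations of the Schur roots~$\vect{\beta}_i$, and $\ext(\vect{\beta}_i,\vect{\beta}_j)=0$ for $i\neq j$. Let $\vect{e}$ be the dimension vector of the summand $V_1$, to be chosen below.
\begin{itemize}[noitemsep]
  \item Then $\vect{e}$ is a proper non-zero general subdimension vector.
  \item Because all mutual $\ext$'s between summands vanish, both Euler form terms reduce to $\hom$'s:
\begin{equation*}
  \langle\vect{e},\vect{d}\rangle-\langle\vect{d},\vect{e}\rangle=\hom(\vect{e},\vect{d}-\vect{e})-\hom(\vect{d}-\vect{e},\vect{e}).
\end{equation*}
\end{itemize}
It therefore suffices to choose the summand with $\hom(\vect{e},\vect{d}-\vect{e})=0$. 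For this I would use Schofield's result (also in Derksen--Weyman) that the summands of the canonical decomposition can be ordered so that $\hom(\vect{\beta}_i,\vect{\beta}_j)=0$ for $i<j$. Taking $V_1$ first in this order makes the right-hand side $\leq 0$.

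The delicate case is when the same root occurs several times. If $\vect{\beta}_1$ is a real Schur root and $\vect{d}=m\vect{\beta}_1$ with $m\geq 2$, take $\vect{e}=\vect{\beta}_1$. Then the difference is $(m-1)\bigl(\langle\vect{\beta}_1,\vect{\beta}_1\rangle-\langle\vect{\beta}_1,\vect{\beta}_1\rangle\bigr)=0$, which again violates the strict inequality.

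The main obstacle is the existence of this directed ordering of the canonical decomposition. It rests on the fact that the quiver of the summands (arrows given by nonvanishing $\hom$) has no oriented cycles, which is a genuine theorem rather than a formality. If this becomes too heavy, the fallback is a dimension count: compare $\dim\Rep(Q,\vect{d})$ with the dimension of the locus of decomposable representations, to show directly that some summand admits no generic morphisms into the complement.
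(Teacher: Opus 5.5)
The paper gives no proof of this statement; it quotes \cite[Theorem~6.1]{MR1162487}, so your reconstruction has to stand on its own. Your forward implication is correct. The generic vanishing $\ext(\vect{e},\vect{d}-\vect{e})=0$ gives $\langle\vect{e},\vect{d}-\vect{e}\rangle\geq 0$. Evaluating $\langle\vect{d}-\vect{e},\vect{e}\rangle$ on the specific pair $(V',V'')$, using that $V$ is a brick and indecomposable, gives a value $\leq -1$.

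The converse has a gap as written. The ordering you quote, $\hom(\vect{\beta}_i,\vect{\beta}_j)=0$ for $i<j$, cannot hold for summands counted with multiplicity: if a real Schur root $\vect{\beta}$ occurs twice, then $\hom(\vect{\beta},\vect{\beta})=1$. Your patch only covers $\vect{d}=m\vect{\beta}_1$. A concrete failure: take the quiver $1\to 2$ and $\vect{d}=(3,1)=(1,0)\oplus(1,0)\oplus(1,1)$. Here $\vect{e}=(1,0)$ is first in the order, but $\hom(\vect{e},\vect{d}-\vect{e})=1$, and $\vect{e}=(1,1)$ gives $\hom(\vect{e},\vect{d}-\vect{e})=2$. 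So no single summand satisfies the condition you reduced to. Granting directedness for the \emph{distinct} roots, you can repair this by taking $\vect{e}$ to be all copies of the first root, here $(2,0)$, or by noting that the self-terms cancel in the difference.

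The whole detour is unnecessary, though. The missing observation is that $\phi(\vect{e})\colonequals\langle\vect{e},\vect{d}\rangle-\langle\vect{d},\vect{e}\rangle$ is \emph{linear} in $\vect{e}$ and satisfies $\phi(\vect{d})=0$. Suppose $\vect{d}$ is not a Schur root. Kac's canonical decomposition, which you already invoke, gives a dense open set of representations of the form $V'\oplus V''$ with $V',V''\neq 0$, of fixed dimension vectors $\vect{e}'$ and $\vect{e}''=\vect{d}-\vect{e}'$. The locus of representations admitting a subrepresentation of a given dimension vector is closed, being the image of a quiver Grassmannian that is proper over $\Rep(Q,\vect{d})$. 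Each such locus contains a dense set, so it is everything. Hence both $\vect{e}'$ and $\vect{e}''$ are proper non-zero general subdimension vectors. Then $\phi(\vect{e}')+\phi(\vect{e}'')=\phi(\vect{d})=0$ forces one of them to be $\leq 0$. This needs no $\hom$-computations, no directedness theorem and no multiplicity analysis, so the ``main obstacle'' you identify is not actually there.
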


We will also use the following criterion
for admitting $\theta$-stable representations.
Although it is certainly not new,
we include a proof of this criterion below,
as it does not seem to appear in the literature.

\begin{proposition}
  \label{proposition:theta-stable-criterion}
  Let~$Q$ be a quiver.
  Let~$\vect{d}$ be a dimension vector,
  and let~$\theta$ be a stability parameter
  for which~$\theta(\vect{d})=0$.
  Then there exists a~$\theta$-stable representation of dimension vector~$\vect{d}$
  if and only if
  for every proper non-zero general subdimension vector~$\vect{e}$
  we have that~$\theta(\vect{e})<0$.
\end{proposition}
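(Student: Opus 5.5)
We prove the two implications separately. In both directions we use that $\theta$-stability is an open condition on $\Rep(Q,\vect{d})$, and that for a general representation $V$ the set of dimension vectors of its subrepresentations is exactly the set of general subdimension vectors. The second fact is Schofield's result: $\vect{e}\hookrightarrow\vect{d}$ is general if and only if the general representation of dimension $\vect{d}$ has a subrepresentation of dimension $\vect{e}$. There are finitely many candidate $\vect{e}\leq\vect{d}$, so intersecting the corresponding open dense loci gives the statement about a single general $V$.

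\emph{Only if.} Suppose a $\theta$-stable representation of dimension $\vect{d}$ exists. Since $\Rep(Q,\vect{d})$ is irreducible, the stable locus $\Rep^{\theta\stable}(Q,\vect{d})$ is a non-empty open subset and hence dense. Let $\vect{e}$ be a proper non-zero general subdimension vector. The locus of representations admitting a subrepresentation of dimension $\vect{e}$ is open and non-empty, hence dense, so it meets the stable locus. A point $V$ in the intersection is $\theta$-stable and has a proper non-zero subrepresentation $V'$ with $\dimvect V'=\vect{e}$. Stability then gives $\theta(\vect{e})<0$.

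\emph{If.} Suppose $\theta(\vect{e})<0$ for every proper non-zero general subdimension vector $\vect{e}$. Choose $V$ in the dense open subset of $\Rep(Q,\vect{d})$ on which the set of subdimension vectors of $V$ is exactly the set of general subdimension vectors. To obtain this subset, observe that for each $\vect{e}\leq\vect{d}$ the locus $\{V\mid\exists V'\subseteq V,\ \dimvect V'=\vect{e}\}$ is the image of a projective morphism from a Grassmannian-type incidence variety, so it is closed. Hence it is either all of $\Rep(Q,\vect{d})$, which is the case exactly when $\vect{e}$ is general, or a proper closed subset. Removing the finitely many proper closed subsets leaves the required open set. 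Every proper non-zero subrepresentation $W$ of such a $V$ then has $\dimvect W$ general, so $\theta(\dimvect W)<0$, and $V$ is $\theta$-stable.

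The main obstacle is making the characterisation of general subdimension vectors precise. With the paper's definition, the locus is required to be non-empty and Zariski-open, and we must see that this is equivalent to the locus being everything. The key point is the closedness of the locus, via properness of the quiver Grassmannian fibration $\{(V,V')\}\to\Rep(Q,\vect{d})$. Then non-empty, open and closed in an irreducible space forces the locus to be the whole of $\Rep(Q,\vect{d})$, and the rest of the argument is formal.
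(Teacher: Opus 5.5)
Your proof is correct and takes the same approach as the paper. Both directions rest on the openness of the stable locus and on the fact that the subrepresentations of a general $V\in\Rep(Q,\vect{d})$ have exactly the general subdimension vectors as dimension vectors; the paper simply asserts this fact, whereas you justify it by showing the incidence locus is closed (it is the image of the universal quiver Grassmannian under a proper projection), so that it is either all of $\Rep(Q,\vect{d})$ or a proper closed subset.
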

\begin{proof}
  It suffices to remark that the general representation
  in~$\Rep(Q, \vect{d})$ only admits subrepresentations
  whose dimension vector~$\vect{e}$ is a general subdimension
  vector of~$\vect{d}$.
  If no proper non-zero general subdimension vector $\vect{e}$ of $\vect{d}$
  satisfies $\theta(\vect{e}) \geq 0$, the general representation
  is thus stable.
  Conversely, if there is a stable representation,
  then the general one is also stable since the stable locus is open,
  so we must have~$\theta(\vect{e})<0$
  for every general subdimension vector $\vect{e}$ of $\vect{d}$
  by the definition of stability.
\end{proof}

Recall that $\hom(\vect{d},\vect{d}')$ and $\ext(\vect{d},\vect{d}')$ denote the generic values
of $\dim\Hom(V,V')$ and $\dim\Ext^1(V,V')$ for $V\in\Rep(Q,\vect{d})$ and $V'\in\Rep(Q,\vect{d}')$,
as introduced in~\cite[\S3]{MR1162487}.
We will also need the following lemma, which follows immediately
from the identity
\begin{equation}
  \hom(\vect{d}, \vect{d}') - \ext(\vect{d}, \vect{d}') = \langle \vect{d}, \vect{d}'\rangle,
\end{equation}
together with~\cite[Remark~5.2]{MR5007902}.

\begin{lemma}
  \label{lemma:general-hom-multiple}
  Let~$\hom(\vect{d}, \vect{d}')$ be the general~$\hom$ for dimension vectors~$\vect{d}, \vect{d}'$.
  Then, for any~$n\geq 1$,
  \begin{equation}
    \hom(n \vect{d}, n \vect{d}') = n^2 \hom(\vect{d}, \vect{d}').
  \end{equation}
\end{lemma}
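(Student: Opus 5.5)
The plan is to derive the identity from two ingredients: the bilinearity of the Euler form $\langle-,-\rangle$ and the behaviour of the generic $\ext$ under scaling. By the identity $\hom(\vect{d},\vect{d}')-\ext(\vect{d},\vect{d}')=\langle\vect{d},\vect{d}'\rangle$, it suffices to prove
\begin{equation}
  \ext(n\vect{d},n\vect{d}')=n^2\ext(\vect{d},\vect{d}').
\end{equation}
Indeed, bilinearity gives $\langle n\vect{d},n\vect{d}'\rangle=n^2\langle\vect{d},\vect{d}'\rangle$. Subtracting the two scaled quantities then yields the claim for $\hom$.

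For the scaling of $\ext$, I would invoke \cite[Remark~5.2]{MR5007902}. I expect it to state exactly this multiplicativity of generic $\ext$, or something equivalent such as $\ext(n\vect{d},\vect{d}')=n\ext(\vect{d},\vect{d}')$ in each argument separately. In the latter case, I apply it once in each variable to obtain the factor $n^2$.

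If I had to justify the scaling independently, I would use Schofield's characterisation \cite[\S5]{MR1162487}: $\ext(\vect{d},\vect{d}')=0$ if and only if $\vect{d}'$ is a general subdimension vector of $\vect{d}+\vect{d}'$. More usefully, I would use his formula
\begin{equation}
  \ext(\vect{d},\vect{d}')=\max_{\vect{e}}\bigl(-\langle\vect{e},\vect{d}'\rangle\bigr),
\end{equation}
where the maximum runs over general subdimension vectors $\vect{e}$ of $\vect{d}$, together with an argument that general subdimension vectors of $n\vect{d}$ are controlled by those of $\vect{d}$.

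A more direct route would be upper semicontinuity. The direct sum $V^{\oplus n}$, with $V$ general of dimension vector $\vect{d}$, gives $\ext(n\vect{d},n\vect{d}')\le n^2\ext(\vect{d},\vect{d}')$. For this I would compute $\Ext^1(V^{\oplus n},V'^{\oplus n})=\Ext^1(V,V')^{\oplus n^2}$ and note that the generic value is at most the value at any point.

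The reverse inequality is the main obstacle, since generic representations of dimension vector $n\vect{d}$ need not be direct sums. For this direction I would rely on the cited remark rather than reprove it, which is how the paper frames the lemma.
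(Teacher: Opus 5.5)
Your proposal is correct and follows the paper's route: the paper likewise deduces the lemma directly from the identity $\hom-\ext=\langle-,-\rangle$ (with bilinearity of the Euler form), citing \cite[Remark~5.2]{MR5007902} for the behaviour of generic $\ext$ under scaling. Incidentally, the reverse inequality you flag as the obstacle also follows from ingredients you already name. Your semicontinuity bound gives $\ext(n\vect{e},n(\vect{d}-\vect{e}))\le n^2\ext(\vect{e},\vect{d}-\vect{e})=0$, so by \cite[Theorem~3.3]{MR1162487} $n\vect{e}$ is a general subdimension vector of $n\vect{d}$ whenever $\vect{e}$ is one of $\vect{d}$. Schofield's formula $\ext(\vect{d},\vect{d}')=\max_{\vect{e}}(-\langle\vect{e},\vect{d}'\rangle)$ then yields $\ext(n\vect{d},n\vect{d}')\ge n^2\ext(\vect{d},\vect{d}')$.
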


We are now in a position to prove the claim of this subsection.

\begin{proof}[Proof of \cref{proposition:fractions}]
  Let~$\vect{e}$ be a proper non-zero general subdimension vector of~$\vect{d}$.
  We will first show that~$n\vect{e}$
  is a general subdimension vector of~$n\vect{d}$,
  and then we will use this to show that Schofield's criterion from \cref{proposition:schofield-criterion}
  for~$\vect{d}$ to be a Schur root,
  respectively the criterion from \cref{proposition:theta-stable-criterion}
  for~$\vect{d}$ to admit~$\theta$-stable representations,
  is satisfied.

  By \cite[Theorem~3.3]{MR1162487}
  we have that~$\vect{e}$ is a general subdimension vector of~$\vect{d}$
  if and only if~$\ext(\vect{e},\vect{d}-\vect{e})=0$.
  In this case, we have
  \begin{equation}
    \hom(n\vect{e},n(\vect{d} - \vect{e})) - \ext(n\vect{e}, n(\vect{d} - \vect{e}))
    = \langle n\vect{e}, n(\vect{d} - \vect{e}) \rangle
    = n^2 \langle \vect{e}, \vect{d} - \vect{e}\rangle
    = n^2  \hom(\vect{e}, \vect{d} - \vect{e}),
  \end{equation}
  where the last equality holds because we assumed that~$\vect{e}$
  is a general subdimension vector of~$\vect{d}$;
  to apply \cref{proposition:schofield-criterion}
  it is then enough to show that~$\hom(n\vect{e},n(\vect{d} - \vect{e})) = n^2\hom(\vect{e}, \vect{d} - \vect{e})$,
  which follows from \cref{lemma:general-hom-multiple}.

  As~$n\vect{e}$ is a general subdimension vector of~$n\vect{d}$,
  Schofield's criterion from \cref{proposition:schofield-criterion} gives
  \begin{equation}
    \langle n\vect{d}, n\vect{e}\rangle - \langle n\vect{e}, n\vect{d}\rangle < 0,
  \end{equation}
  which is equivalent to~$\langle \vect{d}, \vect{e} \rangle - \langle \vect{e}, \vect{d} \rangle<0$,
  thus another application of Schofield's criterion shows that~$\vect{d}$ is a Schur root.

  The stable version is analogous.
  Since $n\vect{d}$ admits a $\theta$-stable representation we have $\theta(n\vect{d})=0$,
  and hence $\theta(\vect{d})=0$ by linearity of $\theta$,
  so $\theta$ is a stability parameter for $\vect{d}$.
  For $\vect{e}$ a proper non-zero general subdimension vector of $\vect{d}$,
  we have already shown that $n\vect{e}$ is a general subdimension vector of $n\vect{d}$,
  so by \cref{proposition:theta-stable-criterion} applied to $n\vect{d}$
  we have $\theta(n\vect{e})<0$,
  and hence $\theta(\vect{e})<0$ by linearity of $\theta$.
  Applying \cref{proposition:theta-stable-criterion} again, this time to $\vect{d}$,
  we conclude that $\vect{d}$ admits a $\theta$-stable representation.
\end{proof}

\paragraph{Smooth models}
On a moduli space,
the (non-)existence of a universal bundle
is an important property,
as it can be used to probe the geometry of the moduli space.
In the setting of moduli of vector bundles on curves
this question is answered (in three different ways) in \cite{MR2353678,MR0999313,MR0325615},
and existence is shown to be equivalent to all semistable bundles being stable.
The analogous result for quiver moduli is shown in \cite[Theorem~4.4]{MR3683503},
using the Brauer group,
translating the method of proof from \cite{MR2353678}
to quiver moduli.

For moduli of vector bundles,
a universal projective bundle over the stable locus
can be constructed,
whose fibre at a point~$x$
is the projectivisation of
the stable vector bundle corresponding to~$x$ \cite{MR0325615}.
However, when there are strictly semistable points,
the projective bundle is not Zariski-locally trivial,
or equivalently,
there is no universal vector bundle on the stable locus
whose projectivisation is this projective bundle.

The analogous construction for quiver moduli is as follows.
Let~$\vect{n}$ be another dimension vector.
In \cite[\S3]{MR2511752},
a framing on~$Q$ (dependent on~$\vect{n}$) is used to define the \emph{smooth model}~$P_{\vect{n}}$.
Forgetting the framing data gives a projective morphism
\begin{equation}
  P_{\vect{n}}\to\modulispace[\theta\semistable]{Q,\vect{d}},
\end{equation}
which on the stable locus has as its fibres
a projective space of dimension~$\vect{n}\cdot\vect{d}-1$,
making it a Brauer--Severi scheme over the stable locus.
Moreover,
\cite[Proposition~3.8]{MR2511752} shows that
the smooth model~$P_{\vect{n}}$
is isomorphic to~$\mathbb{P}(\bigoplus_{i\in Q_0}\mathcal{U}_i^{\oplus n_i})$,
where~$\mathcal{U}_i$ denotes the universal locally free sheaf at the vertex~$i$
on the stable locus,
provided that~$\vect{d}$ is~$\theta$-coprime (and in particular coprime),
so that there are no strictly semistable points,
whereas without this condition the fibre over a stable point~$V$ is merely
the projectivisation of~$\bigoplus_{i\in Q_0}V_i^{\oplus n_i}$.

\paragraph{Brauer groups of quiver moduli}
We are now in a position to recall the main result of \cite{MR3683503},
describing the Brauer groups of quiver moduli.
We say that a dimension vector~$\vect{d}$ is \emph{amply stable}
for the stability parameter~$\theta$
if~$\Rep^{\theta\stable}(Q,\vect{d})$ is non-empty,
and the codimension of the~$\theta$-unstable locus in~$\Rep(Q,\vect{d})$ (cf.~\eqref{equation:inclusions-Rep}) is at least~2.
Using this condition, the following description
of the Brauer group is given in \cite[Theorem~4.2]{MR3683503}.
\begin{theorem}[Reineke--Schr\"oer]
  \label{theorem:reineke-schroer}
  Let~$Q$ be a quiver.
  Let~$\vect{d}$ be an amply stable dimension vector
  for the stability parameter~$\theta$.
  Then
  \begin{equation}
    \Br(\modulispace[\theta\stable]{Q,\vect{d}})
    \cong\mathbb{Z}/\gcd(\vect{d})\mathbb{Z},
  \end{equation}
  where the class of the Brauer--Severi scheme~$P_{\vect{n}}$ for~$\vect{n}\neq 0$ is a generator.
\end{theorem}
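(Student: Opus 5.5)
We would prove the Reineke--Schr\"oer theorem by realising the stable moduli space as a quotient: $\Rep^{\theta\stable}(Q,\vect{d})\to\modulispace[\theta\stable]{Q,\vect{d}}$ is a principal $\PGL_{\vect{d}}$-bundle (stable points have trivial stabiliser in $\PGL_{\vect{d}}$, and the quotient is geometric), equivalently $[\Rep^{\theta\stable}(Q,\vect{d})/\GL_{\vect{d}}]\to\modulispace[\theta\stable]{Q,\vect{d}}$ is a $\Gm$-gerbe via $\Delta$. The Brauer group is then computed from the Hochschild--Serre / Leray spectral sequence for this torsor, combined with the geometry of $X\colonequals\Rep^{\theta\stable}(Q,\vect{d})$.

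The key input on $X$ comes from ample stability. Since the unstable locus has codimension $\geq2$ in the affine space $\Rep(Q,\vect{d})$, purity gives $\Pic(X)=0$, $\HH^0(X,\Gm)=\field^\times$, and $\Br(X)=\Br(\Rep(Q,\vect{d}))=0$ (purity for the Brauer group of a smooth variety in codimension $2$, and $\Br(\mathbb{A}^N)=0$ in characteristic $0$). We then compare the smooth stack $\mathcal{X}=[X/\GL_{\vect{d}}]$ with $M=\modulispace[\theta\stable]{Q,\vect{d}}$. First, $\Br(\mathcal{X})$ injects into, or is controlled by, $\Br(X)$ together with group cohomology of $\GL_{\vect{d}}$. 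Using $\Pic(X)=0$, $\mathcal{O}(X)^\times=\field^\times$ and that $\GL_{\vect{d}}$ is connected with Picard group $0$, we expect $\Br(\mathcal{X})=0$. Second, the gerbe $\mathcal{X}\to M$ yields the exact sequence
\[
\Pic(\mathcal{X})\to\Pic(B\Gm)=\mathbb{Z}\to\Br(M)\to\Br(\mathcal{X}).
\]
Here $\Pic(\mathcal{X})$ is the character group $\mathbb{Z}^{Q_0}$ of $\GL_{\vect{d}}$, since $\Pic(X)=0$ and the invertible functions are constant. The first map restricts a character $\chi_{\vect{a}}\colon g\mapsto\prod_i\det(g_i)^{a_i}$ to $\Delta$, giving weight $\sum_i a_id_i$. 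Its image is therefore $\gcd(\vect{d})\mathbb{Z}$, so $\Br(M)\cong\mathbb{Z}/\gcd(\vect{d})\mathbb{Z}$, provided the last map is zero.

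For the generator statement, the class in $\Br(M)$ of the boundary of $1\in\mathbb{Z}$ is the obstruction to descending a weight-one $\GL_{\vect{d}}$-equivariant vector bundle. The tautological bundle $\bigoplus_i V_i^{\oplus n_i}$ on $X$ has $\Delta$-weight one. Its projectivisation descends to $P_{\vect{n}}$, so $[P_{\vect{n}}]$ is the image of $1$ and hence a generator.

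The main obstacle is the vanishing $\Br(\mathcal{X})=0$, or at least the vanishing of the image of $\Br(M)\to\Br(\mathcal{X})$, and in particular showing that $\Br(M)$ is torsion and agrees with cohomological $\HH^2(-,\Gm)$. We would first try the Leray spectral sequence for $\pi\colon X\to M$ with $\PGL_{\vect{d}}$-torsor structure, and its $E_2$ terms $\HH^p(\PGL_{\vect{d}}\text{-cohomology},\HH^q(X,\Gm))$. The needed inputs are $\HH^1(X,\Gm)=0$, $\HH^2(X,\Gm)=0$, and the algebraic group cohomology $\HH^1(\PGL_{\vect{d}},\field^\times)=\widehat{\PGL_{\vect{d}}}$ and $\HH^2$ computed via the central extension by $\Delta$. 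Should this be awkward, the fallback is \'etale-local triviality of $\pi$ (Luna slice) combined with Gabber's theorem that $\Br=\Br'$ for quasi-projective $M$.
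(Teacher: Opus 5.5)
The paper does not prove this statement. It quotes it from Reineke--Schr\"oer \cite[Theorem~4.2]{MR3683503}, and the only part argued in the paper is \cref{lemma:brauer-class-comparison}, which identifies $[P_{\vect{n}}]$ with the class of the gerbe \eqref{equation:Gm-gerbe}. Your paragraph on the generator is essentially that argument.

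\textbf{What is correct.} Your lower bound is right. $\Pic(\mathcal{X})$ is the character group $\mathbb{Z}^{Q_0}$ of $\GL_{\vect{d}}$, and its image under the weight map is $\gcd(\vect{d})\mathbb{Z}$. So the gerbe class generates a subgroup $\mathbb{Z}/\gcd(\vect{d})\mathbb{Z}\subseteq\Br(M)$.

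\textbf{The gap.} The upper bound is missing. The vanishing of $\Br(M)\to\Br(\mathcal{X})$ is only ``expected'', and that is precisely the content of the theorem beyond the lower bound. The fallback you sketch does not supply it as written:
\begin{itemize}
\item The Hochschild--Serre shape $E_2^{p,q}=\HH^p(\PGL_{\vect{d}},\HH^q(X,\Gm))$ is not available for torsors under positive-dimensional groups.
\item If the group cohomology is taken with regular cochains, then $\HH^2(\PGL_{\vect{d}},\Gm)=0$. By Rosenlicht a regular $2$-cochain is $c\,\chi_1(g)\chi_2(h)$; the cocycle condition forces $\chi_1=\chi_2=1$, and constants are coboundaries. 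The extension $\GL_{\vect{d}}\to\PGL_{\vect{d}}$ is invisible there because it has no regular section.
\item In the correct \v{C}ech-nerve spectral sequence $E_1^{p,q}=\HH^q(X\times\PGL_{\vect{d}}^p,\Gm)\Rightarrow\HH^{p+q}(M,\Gm)$, the answer sits in $E_1^{1,1}=\Pic(X\times\PGL_{\vect{d}})=\Pic(\PGL_{\vect{d}})$. This term is absent from your list of inputs.
\item Reading $\HH^p$ instead as cohomology of $\operatorname{B}\PGL_{\vect{d}}$ only relocates the problem. Computing $\HH^2(\operatorname{B}\PGL_{\vect{d}},\Gm)$ needs $\Br(\operatorname{B}\GL_{\vect{d}})=0$, a vanishing of the same kind as the one you are trying to avoid.
\end{itemize}

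\textbf{How to fill it.} Apply the \v{C}ech-nerve spectral sequence to the smooth cover $X\to\mathcal{X}$, whose nerve is $X\times\GL_{\vect{d}}^p$.
\begin{itemize}
\item $E_1^{0,2}=\Br(X)=0$.
\item $E_1^{1,1}=\Pic(X\times\GL_{\vect{d}})=0$, since this is an open subset of an affine space.
\item Since $\mathcal{O}(X)^\times=\field^\times$, Rosenlicht's lemma identifies the bottom row with the regular-cochain complex $\field^\times\times(\mathbb{Z}^{Q_0})^p$. Hence $E_2^{2,0}=0$ by the computation above.
\end{itemize}
So $\HH^2(\mathcal{X},\Gm)=0$, and your gerbe sequence yields $\Br(M)\cong\mathbb{Z}/\gcd(\vect{d})\mathbb{Z}$.

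Alternatively, Sansuc's exact sequence for torsors under connected linear groups,
$\Pic(X)\to\Pic(\PGL_{\vect{d}})\to\Br(M)\to\Br(X)$,
gives the result at once. Here $\Pic(\PGL_{\vect{d}})\cong\mathbb{Z}/\gcd(\vect{d})\mathbb{Z}$ is the cokernel of restricting characters of $\GL_{\vect{d}}$ to $\Delta$.

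That $\Br(M)=\HH^2(M,\Gm)$ is torsion is standard for smooth quasi-projective $M$, so it is not an obstacle.

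\textbf{Hypothesis.} For $\mathcal{O}(X)^\times=\field^\times$ and $\Br(X)=0$, you need the complement of $X=\Rep^{\theta\stable}(Q,\vect{d})$ itself to have codimension $\geq2$. Codimension $\geq2$ of the unstable locus alone is not enough. For $\mathrm{K}_3$ and $\vect{d}=(2,2)$, the unstable locus has codimension $5$. But the strictly semistable locus is a divisor, cut out by a semi-invariant: the discriminant of the ternary quadratic form $\det(xA+yB+zC)$. That semi-invariant is a non-constant unit on $X$. This is consistent with the paper treating $n=2$ separately in \cref{remark:kronecker-to-jordan}.
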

Assuming \cite[Conjecture~4.3]{MR3683503},
the condition that~$\vect{d}$ is~$\theta$-amply stable can be replaced by
the weaker condition that~$\vect{d}$ admits a~$\theta$-stable representation,
which is simply a non-emptiness condition.
We revisit this conjecture in \cref{subsection:comment-on-reineke-schroer-conjecture}.

\paragraph{Another generator for the Brauer group}
For the computation in \cref{section:main}
we need to use another incarnation of the generator of the Brauer group
in \cref{theorem:reineke-schroer}.
In \eqref{equation:3-moduli-spaces},
the moduli space of stable representations is constructed using geometric invariant theory as
\begin{equation}
  \modulispace[\theta\stable]{Q,\vect{d}}
  \cong
  \Rep^{\theta\stable}(Q,\vect{d})/\!/\GL_{\vect{d}}.
\end{equation}
However, the group~$\PGL_{\vect{d}}$ also acts on~$\Rep^{\theta\stable}(Q,\vect{d})$,
with the same GIT quotient,
and this action is now \emph{free},
so we can in fact construct the natural~$\Gm$-gerbe
\begin{equation}
  \label{equation:Gm-gerbe}
  [\Rep^{\theta\stable}(Q,\vect{d})/\GL_{\vect{d}}]
  \to
  [\Rep^{\theta\stable}(Q,\vect{d})/\PGL_{\vect{d}}]
  \cong
  \modulispace[\theta\stable]{Q,\vect{d}}.
\end{equation}
The following lemma compares the Brauer class
attached to \eqref{equation:Gm-gerbe}
to the one attached to~$[P_{\vect{n}}]$ as in \cref{theorem:reineke-schroer}.
\begin{lemma}
  \label{lemma:brauer-class-comparison}
  The class~$[P_{\vect{n}}]$ in~$\Br(\modulispace[\theta\stable]{Q,\vect{d}})$
  is
  the class of the~$\Gm$-gerbe~\eqref{equation:Gm-gerbe}.
\end{lemma}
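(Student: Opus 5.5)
The plan is to compare both classes in $\HH^2(\modulispace[\theta\stable]{Q,\vect{d}},\Gm)$ through the standard dictionary between $\Gm$-gerbes, $\PGL$-torsors and Brauer--Severi schemes. Write $M=\modulispace[\theta\stable]{Q,\vect{d}}$ and $R=\Rep^{\theta\stable}(Q,\vect{d})$. Since the $\PGL_{\vect{d}}$-action on $R$ is free, $R\to M$ is a $\PGL_{\vect{d}}$-torsor. The gerbe \eqref{equation:Gm-gerbe} is then the gerbe of liftings of this torsor along the central extension
\begin{equation}
  1\to\Gm\xrightarrow{\Delta}\GL_{\vect{d}}\to\PGL_{\vect{d}}\to1,
\end{equation}
so its class is the image of $[R]\in\HH^1(M,\PGL_{\vect{d}})$ under the boundary map $\partial\colon\HH^1(M,\PGL_{\vect{d}})\to\HH^2(M,\Gm)$. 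I will take this description as the definition of the Brauer class attached to \eqref{equation:Gm-gerbe}.

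Next I will realise $P_{\vect{n}}$ as a twist of a projective space by this same torsor. Set $W=\bigoplus_{i\in Q_0}(\field^{d_i})^{\oplus n_i}$, a vector space of dimension $\vect{n}\cdot\vect{d}$. The group $\GL_{\vect{d}}$ acts on $W$ through a representation $\rho\colon\GL_{\vect{d}}\to\GL(W)$ in which $\Delta$ acts by scalars of weight $1$. Hence $\rho$ descends to $\bar\rho\colon\PGL_{\vect{d}}\to\PGL(W)$, and we get a commutative diagram of central extensions whose map on the kernels $\Gm\to\Gm$ is the identity. Using the description of the smooth model in \cite[\S3]{MR2511752} (stable framed representations modulo $\GL_{\vect{d}}$), over the stable locus I will identify
\begin{equation}
  P_{\vect{n}}|_{M}\cong R\times^{\PGL_{\vect{d}}}\mathbb{P}(W).
\end{equation}
This identification comes from the fact that a framing is a nonzero vector of $W^{\vee}$ up to scalars, or of $W$, depending on convention; in either case the weight is $\pm1$. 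Since the fibre over $V$ is $\mathbb{P}(\bigoplus V_i^{\oplus n_i})$, this is consistent with the description above, and it shows that $[P_{\vect{n}}]=\partial_{\PGL(W)}(\bar\rho_*[R])$.

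Naturality of the boundary map for morphisms of central extensions then gives
\begin{equation}
  \partial_{\PGL(W)}(\bar\rho_*[R])=\partial_{\PGL_{\vect{d}}}([R]),
\end{equation}
because the induced map on the central $\Gm$ is the identity, i.e.\ the scalars have weight $1$. This identifies the two classes.

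The main obstacle is the identification of $P_{\vect{n}}$ with the associated bundle. This requires unwinding the framed-quiver construction of the smooth model and checking the action weights carefully. If the framing convention produces weight $-1$, the classes agree only up to sign. In that case I would either fix the convention (dualising $W$) or observe that a generator of $\mathbb{Z}/\gcd(\vect{d})\mathbb{Z}$ is determined only up to sign, and say so explicitly. A secondary point is to make precise that the Brauer class of a $\Gm$-gerbe is its class in $\HH^2(M,\Gm)$, and that this class is torsion here because the gerbe arises from a $\PGL$-torsor.
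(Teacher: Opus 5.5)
Your argument is correct, but it links the two classes differently from the paper. The paper argues moduli-theoretically. Via (the proof of) \cite[Proposition~3.8]{MR2511752} and \cite[Theorems~4.2 and~4.4]{MR3683503}, it identifies $[P_{\vect{n}}]$ with the obstruction to a universal representation on $\modulispace[\theta\stable]{Q,\vect{d}}$. It then notes that the class of \eqref{equation:Gm-gerbe} is the obstruction to lifting the $\PGL_{\vect{d}}$-torsor $R\to\modulispace[\theta\stable]{Q,\vect{d}}$ to a $\GL_{\vect{d}}$-torsor, and that such a lift again amounts to a universal representation. You bypass universal representations entirely: you realise $P_{\vect{n}}$ over the stable locus as $R\times^{\PGL_{\vect{d}}}\mathbb{P}(W)$ and apply naturality of the boundary map to the morphism of central extensions induced by $\rho$.

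Your route gives an actual equality of classes. Identifying two classes as obstructions to the same problem, taken literally, only shows that they vanish simultaneously. The missing link is that both are boundaries of the same torsor class under compatible extensions, and your naturality step supplies exactly that. The paper indicates this but does not spell it out. Your route also exposes the sign convention, which the paper suppresses; this is harmless there, since later only the fact that $\alpha|_{M^\circ}$ generates $\Br(M^\circ)$ is used. The paper's route is shorter and ties the lemma to the universal-family interpretation.

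The step you flag as the main obstacle is routine. For $\theta$-stable $V$, a framed representation $(V,f)$ is stable exactly when $f\neq 0$. So over the stable locus the framed-stable locus is $R\times(W\setminus\{0\})$ with free $\GL_{\vect{d}}$-action, and quotienting first by $\Delta$ and then by $\PGL_{\vect{d}}$ yields your associated bundle. With framing maps $\field^{n_i}\to V_i$, consistent with the fibres $\mathbb{P}(\bigoplus_i V_i^{\oplus n_i})$, the weight of $\Delta$ on $W$ is $+1$.
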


\begin{proof}
  The class~$[P_{\vect{n}}]$
  is the obstruction to the existence of a universal representation
  by (the proof of) \cite[Proposition~3.8]{MR2511752}
  or \cite[Theorems~4.2~and~4.4]{MR3683503}.
  Because of the \'etale-local decomposition
  of the universal representation into a direct sum over the vertices of~$Q_0$,
  we see that this obstruction is induced by the class of a~$\PGL_{\vect{d}}$-torsor.
  The obstruction measures when the universal representation,
  which always exists on~$\Rep^{\theta\stable}(Q,\vect{d})$
  and because it is~$\GL_{\vect{d}}$-equivariant
  also on~$[\Rep^{\theta\stable}(Q,\vect{d})/\GL_{\vect{d}}]$,
  descends to~$\modulispace[\theta\stable]{Q,\vect{d}}$
  in the setup of the morphism~\eqref{equation:Gm-gerbe}.

  On the other hand,
  the class of the~$\Gm$-gerbe~\eqref{equation:Gm-gerbe}
  is induced by the class of the~$\PGL_{\vect{d}}$-torsor
  given by~$\Rep^{\theta\stable}(Q,\vect{d})\to\allowbreak\modulispace[\theta\stable]{Q,\vect{d}}$.
  The class of the~$\Gm$-gerbe
  is the obstruction to
  lifting this torsor to a~$\GL_{\vect{d}}$-torsor.
  This lifting is equivalent to the existence of a universal representation
  on~$\modulispace[\theta\stable]{Q,\vect{d}}$,
  because the universal representation on~$\Rep(Q,\vect{d})$
  is given by trivial vector bundles
  with the~$\GL_{\vect{d}}$-action
  that also encodes the quotient.
\end{proof}

\section{Proof of the main result}
\label{section:main}
We assume in this section that~$Q$ is acyclic,
that~$\theta$ is a stability parameter,
and that~$\vect{d}$ is a dimension vector
with~$\theta(\vect{d})=0$
for which~$\vect{d}$ is~$\theta$-amply stable.

We will abbreviate the moduli spaces and stacks introduced in \cref{subsection:recollection} to
\begin{equation}
  \begin{aligned}
    M^\circ&\colonequals\modulispace[\theta\stable]{Q,\vect{d}} \\
    M&\colonequals\modulispace[\theta\semistable]{Q,\vect{d}} \\
    \mathcal{M}
    &=\mathcal{M}^{\theta\semistable}(Q,\vect{d})
    \colonequals[\Rep^{\theta\semistable}(Q,\vect{d})/\GL_{\vect{d}}] \\
    \mathcal{M}^\rig
    &\colonequals\mathcal{M}^{\theta\semistable}(Q,\vect{d})^{\rig}
    =[\Rep^{\theta\semistable}(Q,\vect{d})/\PGL_{\vect{d}}]
  \end{aligned}
\end{equation}
so that we have the commutative diagram
\begin{equation}
  \begin{tikzcd}
    & \widetilde{M} \arrow[d, "\pi"] \\
    M^\circ \arrow[r, hook, "i"] \arrow[ru, hook, "j"] & M & \mathcal{M}^\rig \arrow[l] & \mathcal{M} \arrow[l, "r", swap]
  \end{tikzcd}
\end{equation}
where~$r$ is the partial rigidification removing the common stabiliser~$\Gm$ \cite[Theorem~5.1.5]{MR2007376},
$i$ is the inclusion of the stable locus in the moduli space of~$\theta$-semistable representations,
$\pi$ is any resolution of singularities (which is an isomorphism on the smooth locus)
and~$j$ is the induced inclusion of the stable locus.

Note that~$M^\circ\cong[\Rep^{\theta\stable}(Q,\vect{d})/\PGL_{\vect{d}}]$
as in \eqref{equation:Gm-gerbe}.

The morphism
\begin{equation}
  \label{equation:Gm-gerbe-semistable}
  r\colon\mathcal{M}\to\mathcal{M}^\rig
\end{equation}
is by construction a~$\Gm$-gerbe,
which restricts to the~$\Gm$-gerbe \eqref{equation:Gm-gerbe}.
Unlike the latter,
the codomain in \eqref{equation:Gm-gerbe-semistable}
is not a variety,
but a genuine Artin stack,
at least when properly semistable representations exist.
We will denote the associated class by
\begin{equation}
  \alpha\in\HH_\et^2(\mathcal{M}^\rig,\Gm),
\end{equation}
so that by construction~$\alpha|_{M^\circ}$
is the class of the~$\Gm$-gerbe in \eqref{equation:Gm-gerbe}.

\paragraph{Brauer classes on the stable locus}
Because the global dimension of~$\operatorname{rep}Q$ is~1,
we have an inclusion
\begin{equation}
  \modulispace[\theta\stable]{Q,\vect{d}}
  \subseteq
  \modulispace[\theta\semistable]{Q,\vect{d}}_\smooth
\end{equation}
of the stable locus into the smooth locus.
For moduli of vector bundles this is in fact an equality
\cite[Theorem~4]{MR0242185}
(except when~$g=r=2$),
but for quiver moduli it can be a strict inclusion\footnote{
  If~$Q$ is the 3-Kronecker quiver
  then the moduli space of semistable representations of dimension vector~$(2,2)$
  is all of~$\mathbb{P}^5$.
}.

Because~$\pi\colon\widetilde{M}\to M$
is an isomorphism over the smooth locus,
it is in particular an isomorphism over the stable locus.
Hence, by \cite[Corollaire~1.8]{MR0244270}
the open immersion~$j\colon M^\circ\hookrightarrow\widetilde{M}$
induces the \emph{injective} restriction morphism
\begin{equation}
  \label{equation:restriction-along-j}
  j^*\colon\Br(\widetilde{M})\hookrightarrow\Br(M^\circ)\cong\mathbb{Z}/\gcd(\vect{d})\mathbb{Z},
\end{equation}
analogous to \cite[Lemma~2.4]{MR2925603}.
For the latter isomorphism we use the assumption that~$\vect{d}$ is~$\theta$-amply stable,
so that we can apply \cref{theorem:reineke-schroer}.
Note that we can, and will,
assume that~$\gcd(\vect{d})\geq 2$,
as there is nothing to check for \cref{theorem:main} if~$\gcd(\vect{d})=1$.

This allows us to recast the proof of \cref{theorem:main}
into verifying the equivalent characterisation provided by the following lemma,
which is analogous to \cite[Statement~A]{MR2925603}.
\begin{lemma}
  \label{lemma:equivalent-characterization-main-theorem}
  Assume that~$\vect{d}$ is~$\theta$-amply stable.
  \Cref{theorem:main} is equivalent to the claim that
  the following are equivalent
  for~$\ell\in\mathbb{Z}$:
  \begin{enumerate}
    \item\label{enumerate:first-characterization}
      $\ell\alpha|_{M^\circ}\in\image(j^*\colon\Br(\widetilde{M})\hookrightarrow\Br(M^\circ))$,
      i.e., there exists~$\beta\in\Br(\widetilde{M})$
      such that~$j^*(\beta)=\ell\alpha|_{M^\circ}$;
    \item\label{enumerate:second-characterization}
      $\gcd(\vect{d})\mid \ell$, i.e.,~$\ell\alpha|_{M^\circ}=0$ in~$\Br(M^\circ)$.
  \end{enumerate}
\end{lemma}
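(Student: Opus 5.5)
The plan is to reduce everything to the injectivity of $j^*$ in \eqref{equation:restriction-along-j} together with \cref{theorem:reineke-schroer} and \cref{lemma:brauer-class-comparison}. By \cref{theorem:reineke-schroer}, $\Br(M^\circ)\cong\mathbb{Z}/\gcd(\vect{d})\mathbb{Z}$ is generated by $[P_{\vect{n}}]$ for any $\vect{n}\neq 0$. By \cref{lemma:brauer-class-comparison}, this class equals the class of the $\Gm$-gerbe \eqref{equation:Gm-gerbe}, which is $\alpha|_{M^\circ}$ by construction. Hence $\alpha|_{M^\circ}$ is a generator of a cyclic group of order $\gcd(\vect{d})$. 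Consequently, for $\ell\in\mathbb{Z}$ we have $\ell\alpha|_{M^\circ}=0$ if and only if $\gcd(\vect{d})\mid\ell$, which justifies the ``i.e.'' in the second condition. Moreover, every element of $\Br(M^\circ)$ has the form $\ell\alpha|_{M^\circ}$ for some $\ell$.

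Next I would observe that the implication from the second condition to the first always holds: if $\ell\alpha|_{M^\circ}=0$, take $\beta=0\in\Br(\widetilde{M})$. So the claimed equivalence amounts to the implication from the first condition to the second for every $\ell$. This says exactly that every element of $\image(j^*)$ is zero, i.e.\ that $\image(j^*)=0$, using that $\alpha|_{M^\circ}$ generates $\Br(M^\circ)$.

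Finally, since $j^*$ is injective by \eqref{equation:restriction-along-j}, which rests on $\widetilde{M}$ being smooth (hence regular and integral) and \cite[Corollaire~1.8]{MR0244270}, we have $\image(j^*)=0$ if and only if $\Br(\widetilde{M})=0$. This is the statement of \cref{theorem:main} for the given resolution. Since $\widetilde{M}$ was arbitrary, the equivalence holds for every resolution, which proves the lemma.

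I do not expect a genuine obstacle, since the argument is formal once the inputs are in place. The only points needing care are, first, that the identification of $\alpha|_{M^\circ}$ with a \emph{generator}, not merely some element, uses both \cref{theorem:reineke-schroer} and \cref{lemma:brauer-class-comparison}, and so depends on the amply stable hypothesis. Second, the equivalence must be understood for all $\ell$ simultaneously, so that ``$\image(j^*)\subseteq\{0\}$'' can be tested on the generator's multiples.
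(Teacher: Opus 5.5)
Your proposal is correct and follows essentially the same route as the paper: the paper leaves this lemma without a separate proof, relying on the injectivity of $j^*$ in \eqref{equation:restriction-along-j} and on $\alpha|_{M^\circ}$ generating $\Br(M^\circ)\cong\mathbb{Z}/\gcd(\vect{d})\mathbb{Z}$ by \cref{theorem:reineke-schroer} and \cref{lemma:brauer-class-comparison}. You spell out the formal steps it leaves implicit: the trivial direction from the second condition to the first, the reduction to $\image(j^*)=0$, and the quantification over all resolutions $\widetilde{M}$.
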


\paragraph{Construction of a morphism from $\Spec K$}
Let~$K=\field(x,y)$.
We want to construct a~$K$-point of the strictly semistable locus in~$\mathcal{M}^\rig$,
with certain properties to be specified later.
This will require two ingredients:
the construction of a twisted vector bundle,
and the construction of a certain (semi)stable representation.

The first ingredient is given by
the following lemma,
corresponding to (the proof of) \cite[Lemma~2.3]{MR2925603}.
\begin{lemma}
  \label{lemma:unramified-multiple}
  Let~$n\geq 2$.
  Let~$\nu\colon K\to\mathbb{Z}\cup\{\infty\}$
  be the discrete valuation given by the prime ideal~$(x)$ in~$\field[x,y]$,
  with discrete valuation ring~$R_\nu$.
  Let~$\zeta$ be a primitive~$n$th root of unity,
  and let~$D\colonequals(x,y)_\zeta$ be the cyclic algebra.
  The following are equivalent for an integer~$\ell\in\mathbb{Z}$:
  \begin{enumerate}
    \item $\ell[D]\in\Br(K)$ is unramified at~$\nu$,
      i.e.~$\ell[D]\in\Br(R_\nu)\subseteq\Br(K)$;
    \item $n\mid\ell$.
  \end{enumerate}
\end{lemma}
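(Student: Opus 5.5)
The plan is to compute the residue of $\ell[D]$ at $\nu$ and use the exact sequence for Brauer groups of discrete valuation rings in characteristic~0. Since~$\field$ has characteristic~0 and the residue field $\kappa(\nu)=\field(y)$ has characteristic~0, we have the Witt/Gysin exact sequence
\begin{equation}
  0\to\Br(R_\nu)\to\Br(K)\xrightarrow{\partial_\nu}\HH^1(\field(y),\mathbb{Q}/\mathbb{Z}).
\end{equation}
Hence $\ell[D]$ is unramified at~$\nu$ if and only if $\partial_\nu(\ell[D])=\ell\,\partial_\nu([D])=0$. The problem therefore reduces to computing $\partial_\nu([D])$ and checking that it has order exactly~$n$.

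For the computation we use the standard formula for the residue of a cyclic (symbol) algebra. Because~$\field$ is algebraically closed, $\mu_n\subset\field\subset R_\nu$, so $(a,b)_\zeta$ corresponds to the cup product of Kummer classes of~$a$ and~$b$. For $a=x$ a uniformiser and $b=y$ a unit of~$R_\nu$, the tame symbol gives
\begin{equation}
  \partial_\nu\bigl((x,y)_\zeta\bigr)=\pm\,\bigl(\bar y\bigr)\in\field(y)^\times/\field(y)^{\times n}\cong\HH^1(\field(y),\mu_n),
\end{equation}
where we identify $\mu_n\cong\mathbb{Z}/n$ via~$\zeta$. Concretely, this is the class of the cyclic Kummer extension $\field(y)(y^{1/n})/\field(y)$. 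Since $y$ is a prime element of $\field[y]$, it is not a $d$th power in $\field(y)$ for any $d>1$. The class of~$y$ in $\field(y)^\times/\field(y)^{\times n}$ therefore has order exactly~$n$, and $\ell\,\partial_\nu([D])=0$ if and only if $n\mid\ell$. This proves both directions at once: if $n\mid\ell$ then $\ell[D]=0$ outright, because $D$ has period dividing~$n$, so it is certainly unramified.

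The main obstacle is bookkeeping rather than substance. One needs the precise form of the residue map on symbols, including the sign and its compatibility with the identification $\HH^2(K,\mu_n)\to{}_n\Br(K)$, and one needs injectivity of $\Br(R_\nu)\to\Br(K)$ (Auslander--Goldman, or \cite{MR0244270}) so that ``unramified'' is unambiguous. As a sanity check avoiding residues, one could pass to the completion $\field(y)((x))$: for $\ell$ coprime to $n$, $D^{\otimes\ell}$ restricted there is a cyclic division algebra of index~$n$ with a totally ramified maximal subfield $\field(y)((x^{1/n}))$. This shows that $\ell[D]$ and hence its $p$-parts are ramified, and a reduction to prime powers handles the general~$\ell$. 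I would present the residue computation and mention this alternative.
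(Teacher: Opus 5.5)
Your proposal is correct and is the standard residue (tame symbol) argument: the paper does not reprove this lemma but defers to (the proof of) \cite[Lemma~2.3]{MR2925603}, which rests on the same computation, namely that the residue of $(x,y)_\zeta$ at $\nu$ is the class of $\bar y^{\pm1}$ in $\field(y)^\times/\field(y)^{\times n}\hookrightarrow\HH^1(\field(y),\mathbb{Q}/\mathbb{Z})$, and that this class has order exactly $n$. The only step worth stating more directly is that order computation: if $y^m=f^n$ in $\field(y)$, then comparing $y$-adic valuations gives $m=n\,v_y(f)$, so $n\mid m$; this is cleaner than your detour through ``$y$ is not a $d$th power for any $d>1$''.
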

The Brauer class~$[D]\in\Br(K)\cong\HH_\et^2(\Spec K,\Gm)$
of the cyclic algebra in \cref{lemma:unramified-multiple}
also corresponds to a~$\Gm$-gerbe
\begin{equation}
  \label{equation:gerbe-T}
  \mathcal{T}\to\Spec K,
\end{equation}
on which there exists a~1-twisted vector bundle~$\mathcal{E}$ of rank~$n$
such that~$D\cong\sheafEnd(\mathcal{E})$
(see \cite[\S2.13]{gabber-de-jong} for both claims).

The next ingredient is automatic in the case of vector bundles on a curve,
cf.~\cite[\S2 (iv)]{MR2925603},
but as explained in \cref{subsection:existence-stable}
it is not automatic for moduli of quiver representations.

\begin{lemma}
  \label{lemma:special-semistable-representation}
  Let~$Q$ be a quiver.
  Let~$\vect{d}$ be a dimension vector with~$\gcd(\vect{d})\geq 2$,
  and~$\theta$ a stability parameter
  for which~$\theta(\vect{d})=0$.
  Assume that~$M^\circ$ is non-empty,
  i.e.,
  there exists a~$\theta$-stable representation.
  Then there exists a~$\theta$-stable representation~$V$
  of dimension vector~$\vect{d}/\gcd(\vect{d})$
  such that~$V^{\oplus\gcd(\vect{d})}$ defines a
  strictly~$\theta$-semistable representation
  of dimension vector~$\vect{d}$
  and thus a point~$z\in\mathcal{M}^\rig(\field)$,
  and by abuse of notation,
  a point~$z\in M(\field)$.
\end{lemma}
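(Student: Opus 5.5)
The plan is to apply \cref{proposition:fractions} and then verify semistability of the direct sum directly from the definitions. Write $g\colonequals\gcd(\vect{d})\geq 2$ and $\vect{d}'\colonequals\vect{d}/g$, so that $\vect{d}=g\vect{d}'$.

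First, $M^\circ$ is non-empty, so $g\vect{d}'$ admits a $\theta$-stable representation. By \cref{proposition:fractions}\eqref{enumerate:stable-fraction} with $n=g$, there is then a $\theta$-stable representation $V$ of dimension vector $\vect{d}'$. Along the way (as in the proof of that proposition) we have $\theta(\vect{d}')=0$ by linearity. So $V$ is stable for the same parameter $\theta$, and the normalisation $\theta(\vect{d}')=0$ is the one used in the definition of stability.

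Next, set $W\colonequals V^{\oplus g}$, which has dimension vector $\vect{d}$. We check the two claims about $W$.
\begin{itemize}
  \item \emph{$W$ is $\theta$-semistable.} I would argue by induction on the number of summands, using the standard fact that $\theta$-semistable representations of $\theta$-slope zero form an abelian subcategory closed under extensions. Concretely, let $U\subseteq V^{\oplus k}$ be a subrepresentation, and let $p\colon V^{\oplus k}\to V$ be the last projection. Then $U\cap V^{\oplus(k-1)}$ has $\theta\leq0$ by induction. The image $p(U)\subseteq V$ also has $\theta\leq 0$, by semistability of $V$. Additivity of $\theta$ on the exact sequence $0\to U\cap V^{\oplus(k-1)}\to U\to p(U)\to0$ then gives $\theta(\dimvect U)\leq 0$.
  \item \emph{$W$ is strictly semistable.} Because $g\geq 2$, the summand $V\subsetneq W$ is a proper non-zero subrepresentation with $\theta(\vect{d}')=0$. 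Hence $W$ is not stable.
\end{itemize}

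Finally, $W$ is a point of $\Rep^{\theta\semistable}(Q,\vect{d})$. Its image in $[\Rep^{\theta\semistable}(Q,\vect{d})/\PGL_{\vect{d}}]=\mathcal{M}^\rig$ gives $z\in\mathcal{M}^\rig(\field)$. Composing with the map $\mathcal{M}^\rig\to M$ gives the point $z\in M(\field)$, which lies in the strictly semistable locus; note that $W$ is polystable.

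The only non-formal step is the existence of $V$, and that is exactly \cref{proposition:fractions}, which is already established. Everything else is a routine check.
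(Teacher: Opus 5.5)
Your proposal is correct and is essentially the paper's proof. You apply \cref{proposition:fractions} with $n=\gcd(\vect{d})$ to obtain a $\theta$-stable $V$ of dimension vector $\vect{d}/\gcd(\vect{d})$, then use $\gcd(\vect{d})\geq 2$ to see that $V^{\oplus\gcd(\vect{d})}$ is strictly semistable. Your induction via $0\to U\cap V^{\oplus(k-1)}\to U\to p(U)\to 0$ correctly fills in the semistability of the direct sum, which the paper leaves implicit.
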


\begin{proof}
  This follows from \cref{proposition:fractions},
  with~$n=\gcd(\vect{d})$ and dimension vector~$\vect{d}/\gcd(\vect{d})$.
  It is strictly semistable,
  because we assumed that~$\gcd(\vect{d})\geq 2$.
\end{proof}

We will now construct a~$K$-valued point~$x$ of~$\mathcal{M}^\rig$,
with the property that
\begin{itemize}
  \item the class of the restriction of the~$\Gm$-gerbe \eqref{equation:Gm-gerbe-semistable} to~$x$
    coincides with
    the class of the cyclic division algebra $D$ constructed in \cref{lemma:unramified-multiple}
    for $n = \gcd(\vect{d})$;
  \item the image of the point~$x$ in $M(K)$ coincides
   with the image of the point~$z$
    constructed in \cref{lemma:special-semistable-representation}
      under the inclusion~$M(\field)\subset M(K)$,
    so that~$x$ determines a point of~$M\setminus M^\circ$.
\end{itemize}
To do this,
we take the~1-twisted vector bundle~$\mathcal{E}$ on the~$\Gm$-gerbe~$\mathcal{T}\to\Spec K$
given by~\cref{lemma:unramified-multiple} with~$n=\gcd(\vect{d})$,
so that~$\mathcal{E}$ has rank~$\gcd(\vect{d})$,
and take the tensor product (over~$\field$) with
the stable representation~$V$ constructed in \cref{lemma:special-semistable-representation}:
\begin{equation}
  \label{equation:W}
  \mathcal{W}\colonequals\mathcal{E}\otimes_{\field} V.
\end{equation}
Then~$\mathcal{W}$ is an~$\mathcal{O}_{\mathcal{T}}Q$-representation,
which defines a family of~$\theta$-semistable representations
parametrised by the~$\Gm$-gerbe~$\mathcal{T}$,
and we obtain a morphism
\begin{equation}
  f\colon\mathcal{T}\to\mathcal{M}.
\end{equation}
The inertial~$\Gm$ on~$\mathcal{T}$ acts on~$\mathcal{E}$
by scalar multiplication on sections,
and hence acts on~$\mathcal{W}=\mathcal{E}\otimes_{\field}V$
by scalar multiplication as well,
because~$V$ is pulled back from~$\Spec\field$
and so carries the trivial action.
This scaling is precisely the action of the central~$\Gm\subseteq\GL_{\vect{d}}$
that the rigidification~$r$ removes.
Therefore the composition~$r\circ f\colon\mathcal{T}\to\mathcal{M}^\rig$
factors uniquely as
\begin{equation}
  \mathcal{T}\to\Spec K\xrightarrow{x}\mathcal{M}^\rig,
\end{equation}
making the diagram
\begin{equation}
  \label{equation:x}
  \begin{tikzcd}
    \mathcal{T} \arrow[r, "f"] \arrow[d] & \mathcal{M} \arrow[d, "r"] \\
    \Spec K \arrow[r, "x"] & \mathcal{M}^\rig
  \end{tikzcd}
\end{equation}
commute.
In \cite{MR2925603} the analogous morphism~$x$
is constructed in the same way in Step~1 of the proof of their main theorem,
and the analogous identification $x^*\alpha=[D]$
is then asserted without comment;
this implicitly relies on the square being 2-cartesian.
We spell out the missing argument here.
Set~$\mathcal{T}'\colonequals\Spec K\times_{\mathcal{M}^\rig}\mathcal{M}$.
The universal property of the fibre product yields a canonical morphism
\begin{equation}
  g\colon\mathcal{T}\to\mathcal{T}'
\end{equation}
over~$\Spec K$.
Since~$r$ is a~$\Gm$-gerbe,
base change makes~$\mathcal{T}'\to\Spec K$
into a~$\Gm$-gerbe by~\cite[\href{https://stacks.math.columbia.edu/tag/06P3}{Tag~06P3}]{stacks-project}.
The same equivariance that produced the descent~$x$ above
makes~$g$ a morphism of~$\Gm$-gerbes over~$\Spec K$,
and any morphism of gerbes with the same band over a common base
is an isomorphism by~\cite[Lemma~12.2.4]{MR3495343}.
Hence~$g$ is an isomorphism, so the square~\eqref{equation:x} is 2-cartesian.
By construction we have the following result.
\begin{proposition}
  \label{proposition:construction-of-point}
  For~$x\in\mathcal{M}^\rig$ as in \eqref{equation:x}
  we have that
  \begin{enumerate}
    \item We have~$x^*(\alpha)=[D]$ in~$\Br(K)$,
    \item the image of $x$ in $M(K)$ is induced by
      the~$\field$-point~$z$ of~$\mathcal{M}^\rig$
      constructed in \cref{lemma:special-semistable-representation}.
  \end{enumerate}
\end{proposition}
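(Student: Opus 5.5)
Both claims follow directly from the 2-cartesian square \eqref{equation:x} and from the explicit form $\mathcal{W}=\mathcal{E}\otimes_{\field}V$ in \eqref{equation:W}.

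For the first claim, recall that $\alpha\in\HH^2_\et(\mathcal{M}^\rig,\Gm)$ is the class of the $\Gm$-gerbe $r\colon\mathcal{M}\to\mathcal{M}^\rig$. Pulling back along $x$ corresponds to forming the fibre product $\Spec K\times_{\mathcal{M}^\rig}\mathcal{M}=\mathcal{T}'$. We use the following two facts.
\begin{enumerate}
  \item Classes of $\Gm$-gerbes are functorial under base change: the class of the base-changed gerbe is the pullback of the class.
  \item The band of $\mathcal{T}'$ is the pullback of the band of $r$. This band is the central $\Gm\subseteq\GL_{\vect{d}}$, which acts on $\mathcal{W}$ by scalar multiplication.
\end{enumerate}
Hence $x^*\alpha=[\mathcal{T}'\to\Spec K]$. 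Since $g\colon\mathcal{T}\to\mathcal{T}'$ is an isomorphism of $\Gm$-gerbes over $\Spec K$, we get $x^*\alpha=[\mathcal{T}]$. By construction \eqref{equation:gerbe-T}, $[\mathcal{T}]$ is the class corresponding to $[D]$ under $\Br(K)\cong\HH^2_\et(\Spec K,\Gm)$ (here $\Br=\HH^2$ for a field).

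The one point to verify is a sign or identification convention. The twisted sheaf $\mathcal{E}$ is $1$-twisted, meaning inertia acts with weight $1$. On $\mathcal{M}$, the universal representation also has weight $1$ for the central $\Gm$. So $g$ respects the bands via the identity of $\Gm$ rather than via inversion, and therefore $x^*\alpha=[D]$ and not $-[D]$. This is the step I would check most carefully. It is harmless in any case, because $-[D]$ generates the same subgroup and \cref{lemma:unramified-multiple} is insensitive to the sign.

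For the second claim, the composite $\Spec K\to\mathcal{M}^\rig\to M$ is determined by the $S$-equivalence class (the semisimplification) of the family $\mathcal{W}$. The fastest check is to pass to the algebraic closure $\overline{K}$. Over $\overline{K}$ the gerbe $\mathcal{T}$ trivialises, and $\mathcal{E}$ becomes a vector bundle $\overline{K}^{\gcd(\vect{d})}$. So $\mathcal{W}_{\overline{K}}\cong (V\otimes\overline{K})^{\oplus\gcd(\vect{d})}$, which is the base change of the $\field$-point $z$ from \cref{lemma:special-semistable-representation}.

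Since $M$ is a variety (in particular a sheaf), two $K$-points that agree after base change to $\overline{K}$ are equal. Hence $x$ maps to the image of $z$ under $M(\field)\subset M(K)$. This point lies in $M\setminus M^\circ$ because $V^{\oplus\gcd(\vect{d})}$ is strictly semistable, using $\gcd(\vect{d})\geq 2$.
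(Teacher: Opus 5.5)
Your proposal is correct. For (1) it is the paper's argument. For (2) you give a more concrete justification than the paper does.

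For (1), as in the paper, you use the 2-cartesian square \eqref{equation:x} to identify $x^*\alpha$ with the class of $\mathcal{T}\to\Spec K$, and hence with $[D]$. You add two correct points. First, $g$ induces the identity on bands, since the inertia acts with weight $1$ on both $\mathcal{E}$ and the universal representation. Second, a sign ambiguity would be irrelevant for \cref{lemma:unramified-multiple}.

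For (2), the paper only says that after rigidifying and forgetting the torsor, the orbit of $V^{\oplus\gcd(\vect{d})}$ is all that remains. You instead argue in three steps:
\begin{enumerate}
  \item Base change to $\overline{K}$, where $\mathcal{T}$ acquires a section.
  \item Use the commutative square to compute $x_{\overline{K}}\cong z_{\overline{K}}$, already in $\mathcal{M}^\rig(\overline{K})$.
  \item Descend the equality of images using injectivity of $M(K)\to M(\overline{K})$.
\end{enumerate}

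This supplies a mechanism that the paper's phrasing leaves implicit. The point $x$ is \emph{not} isomorphic to $z_K$ in $\mathcal{M}^\rig(K)$: their pullbacks of $\alpha$ are $[D]\neq 0$ and $0$ respectively, the latter because it factors through $\Br(\field)=0$. So the identification can only hold after passing to the scheme $M$. Your argument shows that the sheaf property of $M$ is what makes the twist disappear. The paper's version is shorter but does not say why the gerbe data is lost in $M(K)$.
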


\begin{proof}
  The first identity follows from the 2-cartesian square \eqref{equation:x}:
  pulling back the class~$\alpha$ of the~$\Gm$-gerbe~$r\colon\mathcal{M}\to\allowbreak\mathcal{M}^\rig$
  along~$x$ gives the class of the~$\Gm$-gerbe~$\mathcal{T}\to\Spec K$,
  which is~$[D]$ by the construction of~$\mathcal{T}$ in \eqref{equation:gerbe-T}
  through the identification~$\sheafEnd(\mathcal{E})\cong D$.
  The second claim
  follows by the construction of~$\mathcal{W}$ in \eqref{equation:W}:
  after rigidifying away the~$\Gm$-gerbe
  and forgetting the torsor structure,
  only the data of the orbit of the polystable representation~$V^{\oplus\gcd(\vect{d})}$ remains,
  corresponding to the~$\field$-valued point~$z$.
\end{proof}

\paragraph{Comparing Brauer classes in $\Br(K)$}
Consider the~$K$-point~$x$ of~$\mathcal{M}^\rig$
constructed above.
We now establish the analogue of \cite[Lemma~2.2]{MR2925603},
avoiding their (somewhat terse) construction using Bertini arguments,
giving a direct proof in our special setting.
\begin{lemma}
  \label{lemma:curve-through-x}
  For every~$x\in\mathcal{M}^\rig(K)$
  there exists a pointed morphism
  \begin{equation}
    \label{equation:psi}
    \psi\colon (C,y)\to(\mathcal{M}^\rig,x)
  \end{equation}
  from a smooth and irreducible affine curve~$C$ over~$\Spec K$
  such that~$\psi(C)\cap M^\circ\neq\emptyset$.
\end{lemma}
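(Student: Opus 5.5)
We will prove the lemma by realising~$\mathcal{M}^\rig$ as a quotient of an open subset of an affine space and joining~$x$ to a stable point by a line. Since~$\mathcal{M}^\rig=[\Rep^{\theta\semistable}(Q,\vect{d})/\PGL_{\vect{d}}]$, the quotient map~$q\colon\Rep^{\theta\semistable}(Q,\vect{d})\to\mathcal{M}^\rig$ is a~$\PGL_{\vect{d}}$-torsor, hence smooth and surjective. The first step is to lift~$x$ to a point~$\tilde{x}\in\Rep^{\theta\semistable}(Q,\vect{d})(K')$, where~$K'$ is a suitable extension of~$K$. The torsor~$x^*\Rep^{\theta\semistable}(Q,\vect{d})\to\Spec K$ need not be trivial; in the case at hand it is nontrivial, because~$x^*\alpha=[D]\neq0$. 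To deal with this, we first work over the torsor itself, which is a smooth~$K$-variety~$P$ with a tautological map~$\tilde{x}\colon P\to\Rep^{\theta\semistable}(Q,\vect{d})$.

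In the cleanest version we avoid lifting and work directly with the affine space~$\Rep(Q,\vect{d})$, which is stable under base change. Pick a~$\theta$-stable representation~$s\in\Rep^{\theta\stable}(Q,\vect{d})(\field)$; one exists by ample stability. We want a family of representations over a curve that specialises to~``$x$'' at one point and is generically stable. Because~$x$ is only a point of a quotient stack, we use the twisted description instead: a point of~$\mathcal{M}^\rig$ over a scheme~$S$ is (étale locally) a representation, and the family will be defined on the gerbe~$\mathcal{T}$. Concretely, over~$\mathcal{T}$ we consider
\begin{equation}
  \mathcal{W}_t\colonequals\bigl(\mathcal{W}(\alpha)+t\,\Phi(\alpha)\bigr)_{\alpha\in Q_1},
\end{equation}
where~$\Phi$ is a generic~$\mathcal{O}_{\mathcal{T}}$-linear perturbation, namely a section of the vector bundle~$\bigoplus_\alpha\mathcal{H}om(\mathcal{W}_{\source(\alpha)},\mathcal{W}_{\target(\alpha)})$. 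This bundle has weight~$0$ for the inertial~$\Gm$, so it descends to a vector bundle~$\mathcal{A}$ on~$\Spec K$, which is just a~$K$-vector space; the twisted affine space of representations is the affine space~$\mathbb{A}(\mathcal{A})$ over~$K$. This gives a morphism~$\mathbb{A}(\mathcal{A})\to\mathcal{M}^\rig$ sending the origin to~$x$, and~$\mathbb{A}(\mathcal{A})$ becomes isomorphic to~$\Rep(Q,\vect{d})_{\overline{K}}$ after base change.

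The key step, and the one we expect to be the main obstacle, is to check that the preimage of the stable locus is a nonempty open subset~$U\subseteq\mathbb{A}(\mathcal{A})$. Openness holds because stability is an open condition, checkable after the étale base change. For non-emptiness, note that over~$\overline{K}$ the stable locus is nonempty by ample stability, and a nonempty open subset of an affine space over the infinite field~$K$ has a~$K$-point; hence~$U(K)\neq\emptyset$. Choose~$p\in U(K)$ and let~$L\cong\mathbb{A}^1_K$ be the line through~$0$ and~$p$. Define~$C\colonequals L\cap\mathbb{A}(\mathcal{A})^{\theta\semistable}$, which is an open subset of~$\mathbb{A}^1_K$ containing~$0$ and~$p$, and let~$y\colonequals0$. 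Then~$C$ is a smooth irreducible affine curve over~$K$, the composition~$\psi\colon C\to\mathbb{A}(\mathcal{A})^{\theta\semistable}\to\mathcal{M}^\rig$ satisfies~$\psi(y)=x$, and~$\psi(p)$ lies in~$M^\circ$.

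The remaining points to verify are that the morphism~$\mathbb{A}(\mathcal{A})^{\theta\semistable}\to\mathcal{M}^\rig$ is well defined, and that it sends the origin to~$x$. The first follows because the family~$\mathcal{W}\otimes\mathcal{O}$ deformed over~$\mathbb{A}(\mathcal{A})\times\mathcal{T}$ is~$1$-twisted, so the same descent argument used for~\eqref{equation:x} applies. The second holds because at~$t=0$ the family is~$\mathcal{W}$ itself.
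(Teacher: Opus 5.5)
Your argument is essentially the paper's proof. Your descended weight-$0$ bundle $\mathcal{A}$ on $\mathcal{T}$ is exactly the twist $\Rep(Q,\vect{d})_K\times^{\PGL_{\vect{d},K}}P$ of $\Rep(Q,\vect{d})$ by the $\PGL_{\vect{d}}$-torsor $P$ of $x$. After that you do what the paper does: pick a $K$-point of the non-empty stable open (using that $K$ is infinite), join it to the point corresponding to $x$ by a line, and intersect with the semistable locus.

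Two small adjustments are needed. First, the lemma is stated for arbitrary $x$, so replace $(\mathcal{T},\mathcal{W})$ by the pulled-back gerbe $\Spec K\times_{\mathcal{M}^\rig}\mathcal{M}$ with its tautological $1$-twisted family. Second, choose $p\neq 0$, which is possible since $U(K)$ is dense; this covers the case where $x$ is itself stable, which the paper treats separately.
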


\begin{proof}
  Let~$P\to\Spec K$ be the~$\PGL_{\vect{d},K}$-torsor defined by~$x$,
  with classifying map~$f\colon\Spec K\to \operatorname{B}\PGL_{\vect{d},K}$.
  The linear representation~$\Rep(Q,\vect{d})$ of~$\PGL_{\vect{d}}$
  corresponds to a locally free sheaf~$\mathscr{E}$ on~$\operatorname{B}\PGL_{\vect{d}}$,
  and by~\cite[(14.2.6)]{MR1771927}
  the associated vector bundle~$\mathbb{V}(\mathscr{E})\to\operatorname{B}\PGL_{\vect{d}}$
  pulls back along~$f$ to the twist
  \begin{equation}
    \Rep(Q,\vect{d})_K\times^{\PGL_{\vect{d},K}}P.
  \end{equation}
  Since every vector bundle over the field~$K$ is trivial,
  this twist is isomorphic to an affine space~$\mathbb{A}_K^N$ for some~$N\geq 1$.
  In particular it is (non-canonically) isomorphic to~$\Rep(Q,\vect{d})_K$ itself.
  The reason for passing to the twist
  is that~$x$, which a priori is only a~$K$-point of the stack~$\mathcal{M}^\rig$,
  is encoded by the torsor~$P$ together with a~$\PGL_{\vect{d},K}$-equivariant morphism to the semistable locus,
  and this datum is precisely a~$K$-rational point of the twisted semistable locus.
  Indeed,
  the~$\theta$-(semi)stable loci in~$\Rep(Q,\vect{d})$
  induce non-empty Zariski-open subschemes of the twist,
  and the point~$x$ defines
  a point~$\widetilde{x}\in(\Rep^{\theta\semistable}(Q,\vect{d})_K\times^{\PGL_{\vect{d},K}}P)(K)$.

  The stable locus in the twist is non-empty by assumption,
  and since $K$ is infinite it has a $K$-rational point.
  Let~$\widetilde{s}\in(\Rep^{\theta\stable}(Q,\vect{d})_K\times^{\PGL_{\vect{d},K}}P)(K)$.
  If~$\widetilde{x}$ and~$\widetilde{s}$
  are distinct,
  then let~$L$ be the affine line through them.
  If~$\widetilde{x}$ and~$\widetilde{s}$ coincide,
  then use any affine line~$L$ through~$\widetilde{x}$.
  In both cases,
  we set~$C\colonequals L\cap(\Rep^{\theta\semistable}(Q,\vect{d})_K\times^{\PGL_{\vect{d},K}}P)$.
\end{proof}

With $\psi\colon C\to\mathcal{M}^\rig$ as in \cref{lemma:curve-through-x},
the resolution of singularities~$\pi\colon\widetilde{M}\to M$ is a projective morphism,
and the generic point~$\eta\in C$
maps to~$M^\circ\subset M$ by the moduli-theoretical construction of~$C$,
as~$\Rep^{\theta\stable}(Q,\vect{d})\subset\Rep^{\theta\semistable}(Q,\vect{d})$ is a dense open.
Because $\pi$ is an isomorphism over $M^\circ$,
the composition $C\to\mathcal{M}^\rig\to M$ lifts to a morphism on the dense open of $C$ mapping to $M^\circ$,
and the valuative criterion for properness applied at each closed point of the smooth curve $C$
extends this to a morphism $h\colon C\to\widetilde{M}$.
We thus obtain the commutative diagram
\begin{equation}
  \begin{tikzcd}
    \Spec K \arrow[r, "y"] \arrow[rr, bend left, "x"] & C \arrow[r, "\psi"] \arrow[d, "h"] & \mathcal{M}^\rig \arrow[d] \\
    & \widetilde{M} \arrow[r, "\pi"] & M \\
    & M^\circ \arrow[r, equals] \arrow[u, hook, "j"] & M^\circ. \arrow[u, hook]
  \end{tikzcd}
\end{equation}
Assume that~$\ell\alpha|_{M^\circ}$ extends to~$\widetilde{M}$,
i.e.,
there exists~$\beta\in\Br(\widetilde{M})$ such that~$j^*(\beta)=\ell\alpha|_{M^\circ}$,
as in the first point of \cref{lemma:equivalent-characterization-main-theorem}.
\begin{lemma}
  \label{lemma:psi-ell-alpha-h-beta}
  With the above setup, we have the equality
  \begin{equation}
    \label{equation:psi-ell-alpha-h-beta}
    \psi^*(\ell\alpha)=h^*(\beta)
  \end{equation}
  in~$\Br(C)$.
\end{lemma}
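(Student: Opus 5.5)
The plan is to compare the two classes on a dense open subset of~$C$ and then use injectivity of the restriction map for the regular integral scheme~$C$. Let~$C^\circ\colonequals\psi^{-1}(M^\circ)\subseteq C$, the open preimage of the stable locus under~$C\to\mathcal{M}^\rig\to M$. It is non-empty by \cref{lemma:curve-through-x}, since~$\psi(C)\cap M^\circ\neq\emptyset$, and hence dense because~$C$ is irreducible. Since~$C$ is a smooth (in particular regular and integral) curve over~$K$, \cite[Corollaire~1.8]{MR0244270} shows that the restriction~$\Br(C)\to\Br(C^\circ)$ is injective; indeed both groups embed into~$\Br(K(C))$. So it suffices to prove~$\psi^*(\ell\alpha)|_{C^\circ}=h^*(\beta)|_{C^\circ}$ in~$\Br(C^\circ)$.

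On~$C^\circ$ both maps factor through the stable locus. The open substack of~$\mathcal{M}^\rig$ over~$M^\circ$ is~$[\Rep^{\theta\stable}(Q,\vect{d})/\PGL_{\vect{d}}]\cong M^\circ$, so~$\psi|_{C^\circ}$ factors as~$C^\circ\xrightarrow{\psi^\circ}M^\circ\hookrightarrow\mathcal{M}^\rig$. By construction of~$h$, which was obtained by lifting through~$\pi$ over~$M^\circ$ and extending, we get~$h|_{C^\circ}=j\circ\psi^\circ$. The key point here is that~$\pi$ is an isomorphism over~$M^\circ$, so the lift over~$M^\circ$ is unique. Using functoriality of pullback in~$\HH^2_\et(-,\Gm)$, we then compute
\begin{equation}
  \psi^*(\ell\alpha)|_{C^\circ}=(\psi^\circ)^*(\ell\alpha|_{M^\circ})=(\psi^\circ)^*(j^*\beta)=(j\circ\psi^\circ)^*\beta=h^*(\beta)|_{C^\circ},
\end{equation}
where the middle equality uses the hypothesis~$j^*(\beta)=\ell\alpha|_{M^\circ}$.

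One check is needed: the left-hand side a priori lies in~$\HH^2_\et(C,\Gm)$ rather than~$\Br(C)$. Since~$C$ is a regular curve (of dimension~$1$ over~$K$), we have~$\Br(C)=\HH^2_\et(C,\Gm)$, and torsionness is automatic because~$\alpha$ is~$\gcd(\vect{d})$-torsion on~$\mathcal{M}^\rig$ via the~$\mu_{\gcd}$-reduction from the determinant. Alternatively, one can avoid this by working with~$\HH^2_\et(C,\Gm)$ throughout, which still injects into~$\HH^2_\et(K(C),\Gm)$ for regular integral~$C$.

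I expect the main obstacle to be justifying carefully that~$\psi$ restricted to~$C^\circ$ genuinely lands in the open substack identified with~$M^\circ$, and that~$h|_{C^\circ}=j\circ\psi^\circ$ as morphisms rather than merely pointwise. I would handle this by noting that the preimage of~$M^\circ$ in~$\mathcal{M}^\rig$ is exactly the stable open substack, which is an algebraic space isomorphic to~$M^\circ$. Then~$\pi^{-1}(M^\circ)\to M^\circ$ is an isomorphism, so both maps~$C^\circ\to\widetilde{M}$ agree after composing with~$\pi$ and land in~$\pi^{-1}(M^\circ)$. The injectivity step for~$C$ is then standard.
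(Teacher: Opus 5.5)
Your proof is correct and follows the same route as the paper: the two classes agree on the dense open preimage of $M^\circ$ in $C$, and injectivity of the Brauer group of the regular integral curve $C$ into that of its function field then forces equality on all of $C$. You also usefully spell out the functoriality computation behind the agreement on that open, via $h|_{C^\circ}=j\circ\psi^\circ$ and the hypothesis $j^*(\beta)=\ell\alpha|_{M^\circ}$, which the paper leaves implicit; your torsion aside is harmless but unnecessary, since $\HH^2_\et(C,\Gm)$ is already torsion for regular integral $C$.
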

\begin{proof}
  By construction,
  $C$ is an irreducible regular curve.
  The classes~$\psi^*(\ell\alpha)$ and~$h^*(\beta)$
  coincide on the dense open subset~$h^{-1}(M^\circ)\subset C$.
  Since the natural map from the Brauer group of a regular integral curve
  to the Brauer group of its function field is injective,
  they coincide on~$C$.
\end{proof}
The above construction then allows us to make the following comparison.
\begin{lemma}
  \label{lemma:equality}
  With the above setup, we have the equalities
  \begin{equation}
    \ell[D]=x^*(\ell\alpha)=y^*\circ h^*(\beta)
  \end{equation}
  in~$\Br(K)$.
\end{lemma}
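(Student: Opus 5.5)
The plan is to prove the two equalities separately, each by pulling back a known class along a commutative triangle of morphisms and using functoriality of pullback on $\HH^2_\et(-,\Gm)$.

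\textbf{First equality.} By \cref{proposition:construction-of-point}, $x^*(\alpha)=[D]$ in $\Br(K)$. Pullback $x^*\colon\HH^2_\et(\mathcal{M}^\rig,\Gm)\to\HH^2_\et(\Spec K,\Gm)=\Br(K)$ is a group homomorphism. Hence $x^*(\ell\alpha)=\ell\,x^*(\alpha)=\ell[D]$.

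\textbf{Second equality.} By \cref{lemma:curve-through-x} we have $x=\psi\circ y$; this is exactly the statement that $\psi$ is a pointed morphism $(C,y)\to(\mathcal{M}^\rig,x)$. Functoriality of pullback on étale cohomology of algebraic stacks then gives
\begin{equation}
  x^*(\ell\alpha)=y^*\psi^*(\ell\alpha).
\end{equation}
\cref{lemma:psi-ell-alpha-h-beta} gives $\psi^*(\ell\alpha)=h^*(\beta)$ in $\Br(C)$. Applying $y^*$ yields $y^*\psi^*(\ell\alpha)=y^*h^*(\beta)$. Chaining this with the first equality gives
\begin{equation}
  \ell[D]=x^*(\ell\alpha)=y^*\circ h^*(\beta).
\end{equation}

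\textbf{Main obstacle: a type check.} \cref{lemma:psi-ell-alpha-h-beta} states an equality in $\Br(C)$, while $\psi^*(\ell\alpha)$ a priori lives in $\HH^2_\et(C,\Gm)$. Since $C$ is a regular curve over a field, Gabber's theorem (or simply the fact that $C$ is a regular scheme of dimension~$1$) gives $\Br(C)=\HH^2_\et(C,\Gm)$. This identification is compatible with the isomorphism $\Br(K)\cong\HH^2_\et(\Spec K,\Gm)$ used for $[D]$. So all the pullbacks above are compatible, and no further input is needed.
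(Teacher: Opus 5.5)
Your proposal is correct and follows the paper's proof exactly. The first equality is \cref{proposition:construction-of-point} together with additivity of $x^*$, and the second comes from pulling back \cref{lemma:psi-ell-alpha-h-beta} along $y$ using $\psi\circ y=x$; your identification of $\Br(C)$ with $\HH^2_\et(C,\Gm)$ for the regular curve $C$ is a correct detail that the paper leaves implicit.
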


\begin{proof}
  The first equality is \cref{proposition:construction-of-point},
  the second follows from restricting~\cref{lemma:psi-ell-alpha-h-beta}
  along~$y$,
  and using that~$\psi\circ y=x$.
\end{proof}

\paragraph{Conclusion of the proof}
We can now conclude in the same way as \cite{MR2925603}.
\begin{proof}[Proof of \cref{theorem:main}]
  We will now prove that the two characterisations
  of \cref{lemma:equivalent-characterization-main-theorem} are equivalent.
  Let~$R_\nu$ be the discrete valuation ring from \cref{lemma:unramified-multiple},
  with fraction field~$K$.
  Applying the valuative criterion for properness to
  \begin{equation}
    \begin{tikzcd}
      \Spec K \arrow[r, "h\circ y"] \arrow[d] & \widetilde{M} \arrow[d] \\
      \Spec R_\nu \arrow[r] \arrow[ru, dashed] & \Spec\field
    \end{tikzcd}
  \end{equation}
  we obtain the dashed morphism.
  This implies that~$y^*\circ h^*(\beta)$
  comes from a Brauer class on~$\Spec R_\nu$,
  and by the equality in \cref{lemma:equality}
  the class~$\ell[D]$ is unramified.
  By \cref{lemma:unramified-multiple}
  this happens if and only if~$\gcd(\vect{d})\mid\ell$.
  We have thus proved the implication from \cref{enumerate:first-characterization}
  to \cref{enumerate:second-characterization}
  in \cref{lemma:equivalent-characterization-main-theorem}.
  The converse is immediate,
  and thus by \cref{lemma:equivalent-characterization-main-theorem}
  we have finished the proof of \cref{theorem:main}
  in the case that~$\vect{d}$ is~$\theta$-amply stable.
\end{proof}

\section{Further remarks}
\label{section:remarks}

\subsection{Remarks on the statement}
We are now in a position to make some remarks on \cref{theorem:biswas-hogadi-holla,theorem:main}.
\begin{remark}
  \label{remark:motivation}
  When~$\gcd(r,\deg\mathcal{L})=1$
  (resp.~$\gcd(\vect{d})=1$ and~$\theta$ is generic)
  these moduli spaces are smooth projective varieties
  that are \emph{rational} \cite{MR1820549} (resp.~\cite{MR1914089}).
  This implies \cref{theorem:main,theorem:biswas-hogadi-holla}
  by the birational invariance of the Brauer group
  for smooth projective varieties
  \cite[Proposition~6.2.7]{MR4304038}.

  When~$\gcd(r,\deg\mathcal{L})\geq 2$
  (resp.~$\gcd(\vect{d})\geq 5$,
  as rationality is already known when~$\gcd(\vect{d})\in\{2,3,4\}$ \cite{MR0540954,MR0563230})
  these moduli spaces are (usually) \emph{singular}
  and their rationality is a major open question.
  \Cref{theorem:main,theorem:biswas-hogadi-holla} can thus be seen as
  the vanishing of an obstruction to (stable) rationality,
  which also motivated the results by Saltman and Le Bruyn--Schofield.
\end{remark}
Note that two conditions in \cref{theorem:main} can be removed a posteriori,
as explained by the following remarks.
\begin{remark}[Acyclicity]
  \label{remark:acyclicity}
  The acyclicity condition in the statement of \cref{theorem:main}
  is to ensure that
  the moduli space~$\modulispace[\theta\semistable]{Q,\vect{d}}$
  is a \emph{projective} variety,
  which is needed for the proof.
  However, a posteriori one can drop the acyclicity condition,
  provided one rephrases the conclusion as
  the vanishing of the unramified Brauer group of the function field,
  using the stable birational equivalence due to Le Bruyn--Schofield \cite[Theorem~3]{MR1048393}:
  the unramified Brauer group is isomorphic to
  the Brauer group of any smooth and proper model of the function field
  \cite[Proposition~6.2.7]{MR4304038}.
\end{remark}

\begin{remark}[Ample stability]
  \label{remark:ample-stability}
  The condition in \cref{theorem:main}
  that~$\vect{d}$ is~$\theta$-amply stable
  can be replaced by asking that~$\vect{d}$ simply admits a~$\theta$-stable representation,
  by the stable birational invariance
  of the Brauer group of the function field
  due to Le Bruyn--Schofield.
  It is necessary in the current proof,
  because of the proof's dependence on
  the Reineke--Schr\"oer description of the Brauer group in \cref{theorem:reineke-schroer}.
  We revisit this aspect in \cref{subsection:comment-on-reineke-schroer-conjecture}.
\end{remark}

Finally,
it is possible to go directly from the vanishing~$\Br(\widetilde{M})=0$
to the vanishing of~$\Br_\ur(\mathrm{C}_n)$,
as proven by Saltman and Colliot-Th\'el\`ene--Sansuc,
without appealing to stable birational invariance of function fields of quiver moduli,
thus truly giving a self-contained algebro-geometric proof that~$\Br_\ur(\mathrm{C}_n)=0$.
This uses a trick that can be found in, e.g., \cite[page~374]{MR2057404}.
\begin{remark}[Bypassing Le Bruyn--Schofield's stable birational equivalence]
  \label{remark:kronecker-to-jordan}
  Let~$\mathrm{L}_m$ be the~$m$-loop (or generalised Jordan) quiver,
  and let~$\mathrm{K}_{m+1}$ be the~$(m+1)$th generalised Kronecker quiver.
  Let~$n\geq1$ be an integer.
  There exists a morphism
  \begin{equation}
    \Rep(\mathrm{L}_m,(n))
    \to
    \Rep(\mathrm{K}_{m+1},(n,n))
    :
    (M_1,\ldots,M_m)
    \mapsto
    (\mathrm{I}_n,M_1,\ldots,M_m),
  \end{equation}
  where each~$M_i$ is an~$n\times n$-matrix.
  This induces an open immersion
  \begin{equation}
    \modulispace[\semisimple]{\mathrm{L}_m,(n)}
    \hookrightarrow
    \modulispace[\theta\semistable]{\mathrm{K}_{m+1},(n,n)},
  \end{equation}
  which for~$n=1$ is simply the inclusion of~$\mathbb{A}^m$ in~$\mathbb{P}^m$,
  where~$\theta$ is a stability parameter in the unique non-trivial chamber.

  For~$m=2$ we have that
  \begin{equation}
    \field(\modulispace[\semisimple]{\mathrm{L}_2,(n)})
    \cong
    \mathrm{C}_n.
  \end{equation}
  On the other hand,
  by \cite[Proposition~6.2]{MR3683503},
  the~3-Kronecker quiver with dimension vector~$(n,n)$
  is~$\theta$-amply stable when~$n\geq 3$,
  so that we can apply \cref{theorem:main}
  to conclude that $\Br(\widetilde{M})=0$.
  The open immersion above together with \cref{proposition:function-fields}
  gives $\field(\widetilde{M})\cong\mathrm{C}_n$,
  so by \cite[Proposition~6.2.7]{MR4304038}
  we conclude that
  \begin{equation}
    \Br_\ur(\mathrm{C}_n)
    =\Br(\widetilde{M})
    =0.
  \end{equation}
  For~$n=1$ there is nothing to check,
  and for~$n=2$ one can use \cite[Proposition~7.1]{MR3683503},
  which directly verifies \cref{theorem:reineke-schroer},
  and the rest of the proof of \cref{theorem:main} is independent of ample stability.
\end{remark}

\subsection{Centres of generic matrices and quiver moduli}
\label{subsection:saltman}
The more geometrically (as opposed to algebraically) inclined reader might wonder how
the object~$\mathrm{C}_n$ appearing in \cref{table:summary}
relates to quiver moduli.

\paragraph{Centres of generic matrices}
We recall the following constructions from \cite[\S2]{MR0812169} and \cite{MR1177334},
see also \cite{MR1958908}.

For~$n,r\geq 1$ we consider the polynomial algebra~$A=\field[x_{i,j,k}\mid i,j=1,\ldots,n,k=1,\ldots,r]$
and we define the generic matrix~$X_k\in\Mat_n(A)$ for~$k=1,\ldots,r$
as~$(X_k)_{i,j}=x_{i,j,k}$.
These matrices generate the \emph{ring of generic matrices}~$\mathrm{R}(\field,n,r)$
as a subring of~$\Mat_n(A)$.
The ring~$\mathrm{R}(\field,n,r)$ is a domain,
whose central localisation
is the \emph{generic division algebra}~$\mathrm{UD}(\field,n,r)$.
It is a division algebra of degree~$n$ over its centre~$\mathrm{Z}(\field,n,r)$.
It is generic
in the sense that every division algebra of degree~$n$ over an extension field of~$\field$
generated by~$r$ elements over its centre
is a specialisation of~$\mathrm{UD}(\field,n,r)$,
see~\cite[Chapter~14]{MR1692654}.
See \cite{MR0224657} for an early reference.

One sees that~$\mathrm{UD}(\field,n,2)$ is a subalgebra of~$\mathrm{UD}(\field,n,r)$,
inducing~$\mathrm{Z}(\field,n,2)\subseteq\mathrm{Z}(\field,n,r)$,
which is a purely transcendental field extension by \cite[Theorem~3.3.31]{MR0576061}.
This shows how the rationality of~$\mathrm{C}_n\colonequals\mathrm{Z}(\field,n,2)$ is the essential case,
for the influential question whether this is a purely transcendental field extension of~$\field$.
The state of the art is summarised in the survey paper \cite{MR1177334}:
it is rational for~$n=2,3,4$ and stably rational for~$n\mid 420$,
and no progress has been made since.

\paragraph{Centres of generic matrices and the generalised Jordan quiver}
There are several viewpoints on the field~$\mathrm{C}_n$,
the one which is relevant for us is explained in \cite[\S3]{MR1177334}.
The following summarises the discussion.
\begin{proposition}
  \label{proposition:c-n-as-invariant-field}
  Let~$\field$ be a field.
  Let~$\GL_n(\field)$ act on~$\Mat_n(\field)\oplus\Mat_n(\field)$ by simultaneous conjugation,
  and let~$\mathrm{P}_n$ be the polynomial~$\field$-algebra which is its coordinate ring.
  Then~$\mathrm{C}_n\cong\field(\mathrm{P}_n^{\GL_n})$.
\end{proposition}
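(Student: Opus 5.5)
We reduce to the field of invariants of two generic matrices and then compare it with the centre of the generic division algebra. The proof has three steps.

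\textbf{Step 1.} By definition, $\mathrm{C}_n=\mathrm{Z}(\field,n,2)$ is the centre of $\mathrm{UD}(\field,n,2)$. This algebra is the central localisation of the ring $\mathrm{R}(\field,n,2)\subseteq\Mat_n(A)$ generated by the generic matrices $X_1,X_2$, where $A=\field[x_{i,j,k}]$ is exactly the coordinate ring $\mathrm{P}_n$ of $\Mat_n(\field)\oplus\Mat_n(\field)$. Consider the action of $\GL_n$ on $\Mat_n(A)$ that conjugates the matrix and acts on the coefficients through the simultaneous conjugation action on $\Mat_n\oplus\Mat_n$. The generic matrices are invariant for this action, so $\mathrm{R}(\field,n,2)\subseteq\Mat_n(A)^{\GL_n}$. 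In particular, central elements of $\mathrm{R}(\field,n,2)$ are scalar matrices with entries in $\mathrm{P}_n^{\GL_n}$. Hence the centre of $\mathrm{R}(\field,n,2)$ is contained in $\mathrm{P}_n^{\GL_n}$, and passing to fractions gives $\mathrm{C}_n\subseteq\field(\mathrm{P}_n)^{\GL_n}$.

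\textbf{Step 2.} Next we show that $\field(\mathrm{P}_n^{\GL_n})$ lies in the centre. The traces $\operatorname{tr}(w(X_1,X_2))$ of words in the generic matrices are central in the trace ring. By Procesi--Razmyslov (first fundamental theorem for matrix invariants) they generate $\mathrm{P}_n^{\GL_n}$. Each such trace lies in the centre of $\mathrm{UD}(\field,n,2)$: the reduced trace of an element of a central simple algebra lies in its centre, and the reduced trace of $w(X_1,X_2)$ computed in $\mathrm{UD}$ agrees with the matrix trace in $\Mat_n(\field(\mathrm{P}_n))$, since $\mathrm{UD}\otimes_{\mathrm{C}_n}\field(\mathrm{P}_n)\cong\Mat_n(\field(\mathrm{P}_n))$. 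This gives $\field(\mathrm{P}_n^{\GL_n})\subseteq\mathrm{C}_n$. Equivalently, one could invoke Formanek's central polynomials to see that the trace ring and $\mathrm{R}(\field,n,2)$ have the same central localisation.

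\textbf{Step 3.} Combining the two inclusions gives
\[
  \field(\mathrm{P}_n^{\GL_n})\subseteq\mathrm{C}_n\subseteq\field(\mathrm{P}_n)^{\GL_n}.
\]
It remains to show that the outer two fields coincide, i.e.\ that $\field(\mathrm{P}_n^{\GL_n})=\field(\mathrm{P}_n)^{\GL_n}$. Simultaneous conjugation on pairs of matrices has a dense open set of closed orbits with trivial $\PGL_n$-stabiliser, namely the pairs generating $\Mat_n$, i.e.\ the simple representations of $\mathrm{L}_2$. So the affine quotient is generically geometric, and by Rosenlicht's theorem, exactly as in the proof of \cref{proposition:function-fields}, rational invariants are fractions of polynomial invariants. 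Note that this is the situation of the Jordan quiver $\mathrm{L}_2$, where $\modulispace[\semisimple]{\mathrm{L}_2,(n)}$ is generically a geometric quotient; this is unlike the acyclic case noted after \cref{proposition:function-fields}.

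The main obstacle is Step 2, which relies on the Procesi--Razmyslov theorem and needs characteristic~$0$. Over an arbitrary field $\field$ one must replace traces by all coefficients of the characteristic polynomials (Donkin's theorem), or instead cite \cite[\S3]{MR1177334} for this identification.
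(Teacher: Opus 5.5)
Your argument is correct, and it does more than the paper. The paper gives no proof of this proposition; it only states it as a summary of the discussion in \cite[\S3]{MR1177334}. Your proof is a self-contained version of the classical invariant-theoretic argument:
\begin{itemize}
  \item invariance of the generic matrices under the combined $\GL_n$-action (concomitants) puts the centre of $\mathrm{R}(\field,n,2)$ inside $\mathrm{P}_n^{\GL_n}$;
  \item the first fundamental theorem for matrix invariants, combined with the reduced trace of the central simple algebra $\mathrm{UD}(\field,n,2)$, gives the reverse inclusion.
\end{itemize}
Your characteristic caveat is appropriate: the statement says ``a field'', but the paper works throughout over an algebraically closed field of characteristic~$0$, where Procesi--Razmyslov applies.

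Two small comments.

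First, the ``in particular'' in Step~1 skips a step. Invariance alone does not make central elements scalar; for instance $X_1$ is invariant but not scalar. You need that the centraliser of $X_1,X_2$ in $\Mat_n(\mathrm{P}_n)$ is $\mathrm{P}_n$. This follows, for example, by specialising to the dense open set of pairs generating $\Mat_n(\field)$.

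Second, Step~3 is not needed for the statement. You already showed that the centre $Z_R$ of $\mathrm{R}(\field,n,2)$ lies in $\mathrm{P}_n^{\GL_n}$, so $\mathrm{C}_n=\operatorname{Frac}(Z_R)\subseteq\field(\mathrm{P}_n^{\GL_n})$ directly. Together with Step~2 this already gives equality; you lost this when you weakened the inclusion to $\field(\mathrm{P}_n)^{\GL_n}$ on passing to fractions. What Step~3 adds is the further identification of $\mathrm{C}_n$ with the field of rational invariants $\field(\mathrm{P}_n)^{\GL_n}$. That is exactly \cref{proposition:function-fields} for $\mathrm{L}_2$ with $\theta=0$, where stable means simple, and the paper records it separately in the remark immediately after the proposition.
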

The discussion in \cref{subsection:recollection}
shows that~$\Spec\mathrm{P}_n^{\GL_n}=\modulispace[\semisimple]{\mathrm{L}_2,(n)}$,
where~$\mathrm{L}_2$ is the generalised Jordan quiver with~2~loops.
Observe that~$(n)$ is a Schur root for this quiver,
so that by \cref{proposition:function-fields}
the function field is also the field of invariant rational functions.

\paragraph{Stable birational reductions of function fields}
The Le Bruyn--Schofield result \cite[Theorem~3]{MR1048393}
shows that~$\field(\modulispace[\theta\semistable]{Q,\vect{d}})$
is \emph{stably isomorphic} to~$\mathrm{C}_n$,
where~$n=\gcd(\vect{d})$,
if~$\vect{d}$ is a Schur root
admitting a~$\theta$-stable representation
(so that \cref{proposition:function-fields} applies).
Or, following the previous discussion,
stably isomorphic to~$\field(\modulispace[\semisimple]{\mathrm{L}_2,(n)})$.

This was later upgraded by Schofield \cite{MR1914089},
showing that~$\field(\modulispace[\theta\semistable]{Q,\vect{d}})$
is \emph{isomorphic} to~$\field(\modulispace[\semisimple]{\mathrm{L}_e,(n)})$,
where we take~$n\colonequals\gcd(\vect{d})$
and~$e\colonequals1-\langle\vect{d}/n,\vect{d}/n\rangle$.
Because we are only interested here in a stable birational invariant
(the unramified Brauer group)
this latter upgrade is not necessary for our discussion,
but it is an important part of the state of the art.

\subsection{The Brauer group of the semistable moduli space}
In \cite[Remark~3.1]{MR2925603} it is claimed that the methods used in op.~cit.
show that
the moduli space of \emph{semistable} vector bundles~$\mathrm{M}_C(r,\mathcal{L})$
has trivial Brauer group.
The argument alluded to in \cite[Remark~3.1]{MR2925603} would also prove that
the moduli space of~$\theta$-\emph{semistable} representations~$\modulispace[\theta\semistable]{Q,\vect{d}}$
has trivial Brauer group,
by the methods in \cref{section:main}.

However, we want to point out that there is seemingly an issue with the argument sketched in op.~cit.
It refers to a result that if an integral scheme~$X$ (with function field~$K$) is locally factorial,
then the natural morphism~$\Br(X)\to\Br(K)$ is injective.
But this seems to require \emph{\'etale}-locally factorial
(or geometrically locally factorial),
cf.~\cite[Theorem~3.5.4]{MR4304038} for the statement with this condition,
and \cite[\S7.6 (3)]{MR4304038} for a locally factorial
yet not geometrically locally factorial example
where the injectivity does not hold.
Thus, it is not clear to us whether
one can conclude that~$\Br(\mathrm{M}_C(r,\mathcal{L}))=0$
(and likewise, it seems one cannot conclude~$\Br(\modulispace[\theta\semistable]{Q,\vect{d}})=0$).

\subsection{When stability coincides with semistability}
\label{subsection:comment-on-reineke-schroer-conjecture}
It is conjectured\footnote{
  There is, however, a typo in the statement: the conjecture must be concerned with
  the moduli space of \emph{stable} representations,
  and not the moduli space of semistable representations,
  to be consistent with \cite[Theorems~4.2 and~4.4]{MR3683503}.
}
in \cite[Conjecture~4.3]{MR3683503} that \cref{theorem:reineke-schroer}
holds \emph{without} the condition of ample stability.
It is mentioned that it is not even known to hold for indivisible dimension vectors.
There is, however, the following easy proposition,
which we will prove as a corollary to \cref{theorem:main}.
\begin{proposition}
  \label{proposition:remark-on-conjecture}
  Let~$Q$ be acyclic,
  and let~$\vect{d}$ be a~$\theta$-coprime dimension vector.
  Then~$\vect{d}$ is indivisible,
  and we have that~$\Br(\modulispace[\theta\stable]{Q,\vect{d}})=0$,
  i.e., \cite[Conjecture~4.3]{MR3683503} holds in this case.
\end{proposition}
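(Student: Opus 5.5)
Recall that~$\vect{d}$ is~$\theta$-coprime if~$\theta(\vect{e})\neq 0$ for every~$0<\vect{e}<\vect{d}$. Then every~$\theta$-semistable representation of dimension vector~$\vect{d}$ is~$\theta$-stable, so~$M^\circ=M$. The proof proceeds in three steps.

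\emph{Indivisibility.} Suppose~$\vect{d}=m\vect{d}'$ with~$m\geq 2$. Then~$\vect{d}'$ is a proper non-zero subdimension vector, and~$\theta(\vect{d}')=\theta(\vect{d})/m=0$ by linearity. This contradicts~$\theta$-coprimality, so~$\gcd(\vect{d})=1$. (If~$M^\circ$ is empty there is nothing further to prove.)

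\emph{Smoothness and the main theorem.} Since~$\vect{d}$ is~$\theta$-coprime, we have~$M=M^\circ$. Because~$Q$ is acyclic, this is a projective variety. By the inclusion of the stable locus into the smooth locus, it is smooth. Hence~$\widetilde{M}=M=M^\circ$ with~$\pi$ the identity is a resolution of singularities, and we would like to apply \cref{theorem:main} to get~$\Br(M^\circ)=\Br(\widetilde{M})=0$.

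\emph{Ample stability.} The catch is that \cref{theorem:main} assumes ample stability, which we are not given here. However, the only place ample stability enters the proof of \cref{theorem:main} is in identifying~$\Br(M^\circ)\cong\mathbb{Z}/\gcd(\vect{d})\mathbb{Z}$ via \cref{theorem:reineke-schroer}. I would bypass this instead of invoking that theorem. Since~$\gcd(\vect{d})=1$, choose~$\vect{n}$ with~$\vect{n}\cdot\vect{d}=1$; this is possible for an indivisible vector, provided one allows integer entries in the usual twisting argument. The standard construction of King then yields a universal representation on~$M^\circ$. Equivalently, by \cref{lemma:brauer-class-comparison}, the gerbe class~$\alpha|_{M^\circ}$ vanishes, and~$P_{\vect{n}}$ is the projectivisation of a vector bundle for \emph{every}~$\vect{n}$. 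This already gives the part of \cref{theorem:reineke-schroer} that is used.

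This is not yet~$\Br(M^\circ)=0$, since a priori~$\Br(M^\circ)$ might contain classes not generated by~$\alpha$. So the cleanest route is different: the function field of~$M^\circ$ is rational by Schofield \cite{MR1914089} when~$\gcd(\vect{d})=1$, as in \cref{remark:motivation}. Alternatively, one can use Le Bruyn--Schofield and Saltman. Then, since~$M^\circ$ is smooth and projective, birational invariance of the Brauer group \cite[Proposition~6.2.7]{MR4304038} gives~$\Br(M^\circ)=\Br(\mathbb{P}^N)=0$. Thus the conjecture holds, since~$\mathbb{Z}/1\mathbb{Z}=0$.

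The main obstacle is exactly this ample stability issue. Citing \cref{theorem:main} verbatim would need ample stability, so the argument must go through either rationality or \cref{remark:ample-stability}, which replaces ample stability by the existence of a stable representation via the stable birational invariance. I would present the proof via \cref{remark:ample-stability} combined with smoothness and projectivity of~$M^\circ=M$. That way the unramified Brauer group of the function field equals~$\Br(M^\circ)$, and it vanishes.
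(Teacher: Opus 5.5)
Your proposal is correct. The route you say you would finally present, via \cref{remark:ample-stability}, is essentially the paper's proof. The paper notes that $M^\circ=M$ is smooth and projective, and uses Le Bruyn--Schofield's stable birational classification to compare it with a Kronecker moduli space with indivisible dimension vector (amply stable by Reineke--Schr\"oer). It applies \cref{theorem:main} there and transports the vanishing back by stable birational invariance of $\Br$ for smooth projective varieties. If you write it this way, name such an amply stable comparison space explicitly, since that is the whole content of the remark.

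Your ``cleanest'' route is genuinely different. Schofield's result \cite{MR1914089} makes $M^\circ$ rational when $\gcd(\vect{d})=1$, and birational invariance then gives $\Br(M^\circ)=\Br(\mathbb{P}^N)=0$ without touching \cref{theorem:main} or ample stability. It is shorter and yields more (rationality), at the cost of a stronger input: birational rather than stable birational equivalence. The paper's version is chosen to exhibit the statement as a corollary of \cref{theorem:main}. However, for an indivisible amply stable comparison space, \cref{theorem:main} reduces to \cref{theorem:reineke-schroer}, since there is nothing to check when the gcd is $1$. So in both arguments the real work is done by the (stable) birational classification.

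Your explicit proof of indivisibility fills in a step the paper leaves implicit. The universal-representation detour is correct but, as you observe, only kills $\alpha|_{M^\circ}$. Saltman is superfluous in your second alternative, since $\mathrm{C}_1$ is already rational.
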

\vspace{-.5\baselineskip}
\begin{proof}
  By~$\theta$-coprimality
  we have that~$\modulispace[\theta\stable]{Q,\vect{d}}=\modulispace[\theta\semistable]{Q,\vect{d}}$
  is a smooth and projective variety,
  and by \cite[Corollary~2]{MR1048393}
  we have that it is stably birational
  to some quiver moduli space where ample stability does hold,
  e.g.,
  by taking some Kronecker moduli space with indivisible dimension vector \cite[Proposition~6.2]{MR3683503}.
  We apply \cref{theorem:main} to this moduli space,
  so by stable birational invariance of the Brauer group
  for smooth projective varieties
  we then have~$\Br(\modulispace[\theta\stable]{Q,\vect{d}})=\Br(\modulispace[\theta\semistable]{Q,\vect{d}})=0$,
  which proves the statement.
\end{proof}
It is easy to find examples of this phenomenon,
and thus settling more cases of \cite[Conjecture~4.3]{MR3683503}.
\begin{example}
  Let~$Q$ be the 3-vertex quiver
  \begin{equation}
    Q\colon
    \begin{tikzpicture}[baseline = -20pt, node distance = 1.5cm]
      \node (1)                      {};
      \node (2) [below left of  = 1] {};
      \node (3) [below right of = 1] {};
      \draw (1) circle (2pt);
      \draw (2) circle (2pt);
      \draw (3) circle (2pt);
      \draw[->] (1) edge (2);
      \draw[->] (1) edge (3);
      \draw[->, bend left = 20]  (2) edge (3);
      \draw[->, bend right = 20] (2) edge (3);
    \end{tikzpicture}
  \end{equation}
  Let~$\vect{d}=(1,1,1)$ be the thin sincere dimension vector,
  and let~$\theta$ be the inner product with~$(2,-1,-1)$.
  Using \textsc{QuiverTools} \cite{MR5076554,quivertools}
  one computes that~$\vect{d}$ is not~$\theta$-amply stable,
  yet~$\modulispace[\theta\stable]{Q,\vect{d}}\cong\mathbb{P}^2$
  (so even isomorphic to a Kronecker moduli space,
  instead of merely stably birational to it as in the proof of \cref{proposition:remark-on-conjecture}),
  whose Brauer group vanishes,
  as predicted by indivisibility.
  One could argue that the data used to define~$\modulispace[\theta\stable]{Q,\vect{d}}$
  was simply poorly chosen,
  with a better choice (using the Kronecker quiver instead)
  immediately implying the desired vanishing.
\end{example}

\printbibliography

\emph{Pieter Belmans}, \url{p.belmans@uu.nl} \\
Mathematical Institute, Utrecht University, Budapestlaan 6, 3584 CD Utrecht, The Netherlands

\emph{Gianni Petrella}, \url{gianni.petrella@uni.lu} \\
Department of Mathematics, Universit\'e de Luxembourg, 6, avenue de la Fonte, L-4364 Esch-sur-Alzette, Luxembourg

\emph{Sebasti\'an Torres}, \url{storresk@udec.cl} \\
Departamento de Matem\'atica, Universidad de Concepci\'on, Casilla 160-C, Concepci\'on, Chile

\end{document}